\theoremstyle{plain}
\numberwithin{equation}{section}
\newtheorem{theorem}{Theorem}
\newtheorem{theoremapp}{Theorem}
\newtheorem{proposition}[equation]{Proposition}
\newtheorem{lemma}[equation]{Lemma}
\newtheorem{assumptions}[equation]{Assumption}
\theoremstyle{definition}
\newtheorem{remark}[equation]{Remark}
\newtheorem*{example}{Example}
\newtheorem{definition}[equation]{Definition}
\newtheorem*{question}{Question}
\newcommand{\Pro}{\mathbb{P}^{\infty}}
\newcommand {\printname}[1] {}
\newcommand{\remove}[1]{}
\def    \R  {{\Bbb R}}
\def    \Z  {{\Bbb Z}}
\def    \Q  {{\Bbb Q}}
\def    \CP {{\Bbb {CP}}}
\def    \P {{\Bbb {P}}}
\def    \C  {{\Bbb C}}
\def    \N  {{\Bbb N}}
\def   \S   {{\Bbb S}}
\def    \Tilde  {\widetilde}
\def    \Gt     {\Tilde{G}}
\begin{document}
\title[One-connectivity  and finiteness of  Hamiltonian $S^1$-manifolds]{One-connectivity  and  finiteness of  Hamiltonian $S^1$-manifolds  with minimal fixed sets}

\author{Hui Li, Martin Olbermann and Donald  Stanley }
\address{School of Mathematical Sciences\\
Soochow University\\
Suzhou, 215006\\
China.}
\email{hui.li@suda.edu.cn}

\address{Ruhr-Universit\"at Bochum\\
        Fakult\"at f\"ur Mathematik\\
        44780 Bochum, Germany.}
\email{martin.olbermann@rub.de}

\address{University of Regina\\
Department of Mathematics and Statistics\\
College West 307.14\\
Regina, Saskatchewan\\
Canada S4S OA2.}
\email{stanley@math.uregina.ca}

\thanks{2010 classification. 53D05,   57R65,  53D20,
55Q05, 57R80.}
\keywords{Symplectic manifold, Hamiltonian circle action, surgery
theory, fundamental group, moment map, cohomology, Chern classes,
s-cobordism.}
\begin{abstract}
Let the circle act effectively in a Hamiltonian fashion on a compact symplectic
manifold $(M, \omega)$. Assume that the fixed point set $M^{S^1}$
has exactly two components, $X$ and $Y$, and that  $\dim(X) +
\dim(Y) +2 = \dim(M)$.  We first show that $X$, $Y$ and $M$ are
simply connected.  Then we show that, up to $S^1$-equivariant diffeomorphism, there are
finitely many such manifolds in each dimension. Moreover, we show that in low dimensions, the manifold
is unique in a certain category.  We use techniques from  both areas of symplectic geometry and geometric
topology.
\end{abstract}

 \maketitle

\section{Introduction}
Let a Lie group act non-trivially on a manifold $M$.  The
manifold $M$ may have certain geometric or topological structures
which are invariant under the action. For instance, $M$ may
have a symplectic structure  or a K\"ahler structure, $M$ may have
a certain homotopy type or a certain cohomology ring.  When $M$ is
a symplectic manifold,  and when the Lie group action is Hamiltonian,
the moment map provides a key tool for the
study of $M$.

  One fundamental question in symplectic geometry is  which symplectic manifold admits a Hamiltonian group action,  or,  when  a symplectic group action
  is a Hamiltonian action. This has been answered in various cases.
  One way of approaching the question  is to look at necessary
  conditions,  and to find the topological and geometrical properties
  such manifolds can have. Furthermore, we can ask how many such manifolds can
  arise with these given properties. These can be the first steps for further
  classifications of the manifolds.

 Consider a compact  connected symplectic
manifold $(M, \omega)$ which admits an effective Hamiltonian $T$-action, where $T$ is a connected compact torus. 
The number $k =\frac{1}{2}\dim(M) -\dim (T)$ is called the {\em complexity} of the
Hamiltonian $T$-manifold $M$.  Complexity zero Hamiltonian manifolds,  also called symplectic toric manifolds,  are classified \cite{D}. Complexity one Hamiltonian $4$-manifolds and complexity one Hamiltonian manifolds whose nonempty symplectic quotients are all $2$-dimensional  are also classified \cite{{AH}, {Au1}, {Kar}, {KT1}, {KT2}}.  All symplectic toric manifolds are K\"ahler \cite{D},
while there exist complexity one Hamiltonian manifolds which do not admit any invariant K\"ahler structure \cite{T0}. 

Let $(M, \omega)$ be a compact symplectic manifold which admits an effective Hamiltonian $T=S^1$-action with moment map  $\phi\colon M\to \R$.  A general classification of such manifolds does not appear tractable.
Let us first look at a simple constraint on the dimensions of the connected components of the fixed point set $M^{S^1}$. First of all, each connected component of $M^{S^1}$ is symplectic hence even dimensional.
The moment map $\phi$ is a perfect Morse-Bott function whose critical set is exactly $M^{S^1}$. The fact that $M$ is compact and symplectic implies that $b_{2i}(M)\geq 1$ for all $0\leq 2i\leq
\dim(M)$.  Using Morse-Bott theory,  we may obtain the inequality
$$\sum_{F\subset M^{S^1}} \left( \dim(F)  + 2 \right)
\geq\dim(M) + 2,$$
 where the sum is over all the connected components of $M^{S^1}$.  When $M$ has minimal even Betti numbers, i.e.,  when  $b_{2i}(M)= b_{2i}(\CP^n) = 1$ for all $0\leq 2i\leq 2n =\dim(M)$,  we have the equality
\begin{equation}\label{dim}
\sum_{F\subset M^{S^1}} \left( \dim(F)  + 2 \right)
=\dim(M) + 2.
\end{equation}
 But this equality does not imply that the even Betti
numbers of $M$ are minimal. For details of these arguments, we refer
to \cite[Section 4]{LT}. Note that the critical set of the moment map has at least two connected components
 --- its minimum and its maximum.
 
 Recent works on compact Hamiltonian $S^1$-manifolds  which have
minimal even Betti numbers or which satisfy (\ref{dim}) include \cite{{To}, {Mc}, {LT},  {L1}, {GS}, {L2}}. In these works,
the authors consider various cases, for which they show that certain important global invariants --- the integral cohomology ring and total Chern class, of the manifold, as well as the circle representations on the normal bundles of the fixed components,  are identical to those of some known examples. In several  interesting cases, e.g.~when the manifold is $6$-dimensional, or is  K\"ahler,  the manifold can be equivariantly identified in the symplectic or complex categories with some standard examples \cite{{Mc}, {L1}, {L2}}. 
Similarly, in another recent  work \cite{Mo}, the author studies the integral cohomology ring and total Chern class of
compact Hamiltonian $T$-manifolds which are GKM-manifolds  with minimal even Betti numbers.

  There is also the recent work \cite{HH}, where the authors study compact Hamiltonian $S^1$-manifolds 
whose fixed point set $M^{S^1}$ consists of two connected components (but without (\ref{dim})). They give certain description
on when two such manifolds are equivariantly diffeomorphic.

\smallskip

In this paper, we restrict our attention to the study of those manifolds satisfying the following
assumption:
\begin{assumptions}\label{conditions}
Let  $(M, \omega)$ be a $2n$-dimensional compact symplectic manifold equipped with an effective Hamiltonian $S^1$-action such that the fixed point set
$M^{S^1}$ consists of two connected components, $X$ and $Y$, satisfying (\ref{dim}), which we rewrite as
 \begin{equation}\label{dim=}
\dim(X) + \dim(Y) + 2 = \dim(M).
\end{equation}
\end{assumptions}

In Section~\ref{section:outline}, we
will see that the K\"ahler manifolds $\CP^n$, and $\Gt_2(\R^{n+2})$ --- the Grassmannian of oriented two-planes in
$\R^{n+2}$ with $n\geq 3$ odd, equipped with some standard circle actions, provide
two families of standard examples of manifolds satisfying Assumption \ref{conditions}. 

 Suppose Assumption \ref{conditions} holds. 
In \cite{LT}, the first author and Tolman
determine the integral cohomology rings and total Chern classes of $X$, $Y$ and $M$. In particular, they prove that the integral cohomology ring and total Chern class of $M$ are isomorphic to either those of $\CP^n$, or those of
$\Gt_2(\R^{n+2})$ with $n\geq 3$ odd. In this paper,  we 
determine the fundamental groups of $X$, $Y$ and $M$, and using this, we show that  in the equivariant smooth category,  there exist only finitely many such manifolds. Hence in a sense, our results show that the manifolds satisfying Assumption \ref{conditions} 
are ``very close to"  the standard examples $\CP^n$ and  $\Gt_2(\R^{n+2})$ with $n$ odd. 
We use both symplectic and topological techniques, in particular, we use surgery theory for the proof of the 
 finiteness. We hope our results and method will provide insights
for further classification of compact Hamiltonian $S^1$-manifolds which have
minimal even Betti numbers or which satisfy (\ref{dim}).

 \smallskip

 Now, let us state more precisely our results. 

\begin{theorem}\label{A}
Under Assumption \ref{conditions}, the manifolds $M$, $X$ and $Y$ are all $1$-connected.
\end{theorem}

\begin{theorem}\label{hcp}
If Assumption \ref{conditions} holds, then
\begin{enumerate}
\item both $X$ and $Y$ are homotopy complex projective spaces with
standard Pontryagin classes;
\item
 when the action is {\em semifree}, i.e., the action is free outside the fixed point set, then $M$
is a homotopy complex projective space with standard Pontryagin
classes.
\end{enumerate}
\end{theorem}

The study of
Hamiltonian $S^1$-manifolds with minimal even Betti numbers was also
motivated by the classical Petrie's conjecture, see \cite{To}.
The conjecture states that if a $2n$-dimensional manifold $M$ of the homotopy type of $\CP^n$
admits a non-trivial circle action, then the total Pontryagin
class of $M$ agrees with that of $\CP^n$.  It is known that Petrie's
conjecture holds when the fixed point set $M^{S^1}$ 
consists of  a small number of connected components, in particular, $2$ connected components \cite{{Wa1}, {Yo}}.
Theorem~\ref{hcp} (2) tells us that when $M$ is a compact symplectic manifold with
a semi-free, Hamiltonian circle action whose fixed point set consists of two connected components satisfying (\ref{dim=}), we have both the assumption and the conclusion of Petrie's conjecture.

 \smallskip

 Our next result says that in the equivariant smooth category, for each fixed dimension, there are only finitely many manifolds  satisfying Assumption \ref{conditions}.

\begin{theorem}\label{equiv.diffeo}
In each fixed dimension, up to $S^1$-equivariant diffeomorphism, there are only finitely many  manifolds fulfilling Assumption \ref{conditions}.
\end{theorem}

 In fact, under Assumption \ref{conditions},  if $M$ or one of $X$ and $Y$ is low-dimensional, we can determine  $M$ in the equivariant symplectic category.  When $X$ or $Y$ is an isolated point, by a theorem of Delzant \cite{D}, $M$ is $S^1$-equivariantly symplectomorphic to $\CP^n$ with a standard circle action. In particular, when $\dim(M) = 2$ or $4$, $X$ or $Y$ has to be isolated. When $\dim(M) = 6$, then either $X$ or $Y$ is an isolated point, or both  $X$ and $Y$ are $2$-dimensional --- $2$-spheres by Theorem~\ref{A}. In the latter case, if the $S^1$ action on $M$ is semifree,  by Gonzalez's work  \cite{G},  $M$ is $S^1$-equivariantly symplectomorphic to $\CP^3$ with a standard circle action. See Section~\ref{discussion} for more discussion.

\begin{remark}\label{lowdim}
For the manifolds fulfilling Assumption \ref{conditions},
we have the following non-equivariant uniqueness in low dimensions:
\begin{enumerate}
\item  when $\dim M = 2n \leq 6$, $M$ is
diffeomorphic to $\CP^n$ if the action is semifree, and is
diffeomorphic to $\Gt_2(\R^{5})$ if the action is not semifree;
\item when $\dim M=10$ or $\dim M=14$ and the action is not semifree,
 $M$ is respectively homeomorphic to $\Gt_2(\R^{7})$ or to $\Gt_2(\R^{9})$.
\end{enumerate}
\end{remark}

\subsubsection*{Acknowledgement}  The work of this paper was done while the authors were visiting
the Max-Planck-Institute. We thank the institute for the fellowship. We thank Diarmuid Crowley and James Davis for many
useful discussions and remarks and for suggesting the reference
\cite{FW}. We thank Ron Fintushel for providing us with a theorem about symplectic homotopy complex projective planes. Finally, we thank the referee for many suggestions and comments which helped us to improve the exposition.

\section{On our results}\label{section:outline}
Let us first give two families of examples of manifolds satisfying Assumption \ref{conditions}, then state earlier results on such manifolds
and finally give a short outline on our proofs.

\begin{example}
[A] Given $n \geq 1$, let $\CP^n$ be the complex projective
space. It naturally arises as a coadjoint orbit of $SU(n+1)$; it
inherits a symplectic form $\omega$ and a Hamiltonian $SU(n+1)$
action.

For any $j \in \{0,\dots,n-1\}$, there is a semifree Hamiltonian
circle action given by
\begin{equation}\label{standard}
\lambda\cdot [z_0, z_1, ..., z_n]=[\lambda z_0, \lambda z_1, ...,
\lambda z_j, z_{j+1}, ..., z_{n}].
\end{equation}
The fixed set consists of two components:
$$\left\{ [z] \in \CP^n
\mid z_k = 0 \ \forall \ k  \leq j \right\}
\simeq \CP^{n-j-1},
\,\,  \mbox{and} \,\,
\left\{ [z] \in \CP^n
\mid z_k = 0 \ \forall \ k  > j \right\}
\simeq \CP^{j}$$
which satisfy (\ref{dim=}).
\end{example}

\begin{example}
[B] Given $n \geq 3$, let  $\Tilde G_2(\R^{n+2})$ denote the
Grassmannian of oriented two-planes in $\R^{n+2}$. This
$2n$-dimensional manifold naturally arises as a coadjoint orbit of
$SO(n+2)$; it inherits a symplectic form $\omega$ and a Hamiltonian
$SO(n+2)$ action.

If $n$ is odd, there is a Hamiltonian circle action on $\Tilde
G_2(\R^{n+2})$ induced by the action on $\R^{n+2}\cong \R \times
\C^{\frac{1}{2}(n+1)}$ given by
$$\lambda \cdot (t, z_1, ..., z_{\frac{1}{2}(n+1)})=
(t, \lambda z_1, ..., \lambda z_{\frac{1}{2}(n+1)}).$$ The fixed set
consists of two components, corresponding to the two orientations on
those real two--planes which are complex lines in $ \{0\} \times \C^{\frac{1}{2}(n+1)}$.
Hence the fixed point set consists of two copies of  $\P \big( \{0\} \times \C^{\frac{1}{2}(n+1)}
\big) \simeq \CP^{\frac{1}{2}(n-1)}$. Moreover, the set of two--planes which lie entirely in $\{0\} \times \C^{\frac{1}{2}(n+1)}$ is
fixed by $\Z_2$. This submanifold, which is symplectomorphic to
$\Tilde G_2(\R^{n+1})$, has codimension $2$ in $M$.
\end{example}

The following theorem summarizes the previous results about those manifolds satisfying Assumption \ref{conditions}.
Note that the cohomology ring and Chern classes  of $M$ in (\ref{ma}) are  the same as those of $\CP^n$,  and the cohomology ring and Chern classes of $M$
 in (\ref{mb}) are the same as those of $\Tilde G_2(\R^{n+2})$, where $n\geq 3$ is odd.

\begin{theorem}\label{LT}\cite[Theorems 1 and 2]{LT}
If Assumption \ref{conditions} holds, then:
\begin{gather}
H^*(X;\Z) = \Z[u]/u^{i  + 1} \ \  \mbox{and} \ \  c(X) =
(1+u)^{i+1},
\quad \mbox{where} \  \dim(X) = 2i; \label{hcx} \\
H^*(Y;\Z) = \Z[v]/v^{j + 1} \ \  \mbox{and} \ \  c(Y) = (1+v)^{j+1},
\quad \mbox{where} \  \dim(Y) = 2j. \label{hcy}
\end{gather}
Moreover, we only have two cases {\bf (A)} and {\bf (B)} below,
where $\deg(x) =2$ and $\deg(y) = n+1$.

\noindent{\bf (A)} the action is semifree,
\begin{equation}\label{ma}
H^*(M; \Z)=\Z[x]/(x^{n + 1}) \ \ \mbox{and} \ \ c(M)=(1+x)^{n + 1};
\end{equation}
\noindent {\bf (B)}  the  action is not semifree, $\dim(X)=\dim(Y)$,
the only finite nontrivial stabilizer group  is $\Z_2$ and  the submanifold
fixed by $\Z_2$ has codimension $2$ in $M$.  In this case,  $n\geq
3$ is odd,
\begin{equation}\label{mb}
 H^*(M;\Z) =
\Z[x,y]/\big(x^{\frac{1}{2}(n+1) } - 2y, y^2\big) \ \ \mbox{and} \ \
c(M) = \frac {(1+x)^{n + 2}} {1+2x}.
\end{equation}
In both cases, the Chern classes of each normal subbundle of $X$ and of
$Y$ on which $S^1$ acts with a fixed stabilizer group are completely determined.
\end{theorem}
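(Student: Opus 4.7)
The plan is to combine equivariant formality (automatic for Hamiltonian $S^1$-manifolds) with Morse--Bott theory for the moment map $\phi$ and Atiyah--Bott--Berline--Vergne (ABBV) localization. Normalize so that $X = \phi^{-1}(\min)$ and $Y = \phi^{-1}(\max)$, and write $\dim X = 2i$, $\dim Y = 2j$, $\dim M = 2n$; the hypothesis (\ref{dim=}) reads $n = i+j+1$. The negative normal bundles have complex ranks $0$ at $X$ and $i+1$ at $Y$, so the perfect Morse--Bott identity for $\phi$ reads
\[
P_t(M) = P_t(X) + t^{2(i+1)} P_t(Y),
\]
with summands supported in disjoint degree ranges; combined with Poincar\'e duality on $M$, $X$ and $Y$ this immediately yields $b_{2k}(X) = b_{2k}(Y)$ in every degree. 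Geometrically, $M$ is the union of the closed disk bundles $D(\nu_X)$ and $D(\nu_Y)$ (of real ranks $2(j+1)$ and $2(i+1)$) glued along their common sphere bundle; this is the cell structure I would use throughout.

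The first main step is to show that $X$ and $Y$ have the integral cohomology and Chern classes of $\CP^i$ and $\CP^j$. Both are closed symplectic, so $[\omega|_X]^i \neq 0$ and $[\omega|_Y]^j \neq 0$, giving $b_{2k}(X), b_{2k}(Y) \geq 1$ in each even degree. To upgrade this to equality and to the ring structure I would apply Kirwan injectivity
\[
H^*_{S^1}(M;\Z) \hookrightarrow H^*_{S^1}(X;\Z) \oplus H^*_{S^1}(Y;\Z)
\]
to the equivariant extension of $\omega$ and its powers: after normalizing $\phi$ so that $\phi(X) = 0$, the restriction of $[\omega - \phi\, t]$ to $X$ is $[\omega|_X]$, and a dimension count against the Morse--Bott decomposition forces $H^*(X)$ and $H^*(Y)$ to be generated as rings by these symplectic classes. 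The disk-bundle decomposition together with equivariant formality $H^*_{S^1}(M) \cong H^*(M) \otimes \Z[t]$ rules out torsion. Writing the equivariant normal bundles as sums of $S^1$-weight line bundles and applying ABBV localization to the equivariant total Chern class of $TM$ pins down the Chern roots, yielding $c(X) = (1+u)^{i+1}$, $c(Y) = (1+v)^{j+1}$, and in particular the Chern classes of $\nu_X$ and $\nu_Y$.

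With the fixed-point data in hand, the second step is to compute $H^*(M;\Z)$ and $c(M)$ and to separate cases (A) and (B). The Mayer--Vietoris sequence for $M = D(\nu_X) \cup D(\nu_Y)$, combined with the Thom isomorphism and the known rings of $X$ and $Y$, presents $H^*(M;\Z)$ once the Euler classes $e(\nu_X)$ and $e(\nu_Y)$ are known; these are symmetric polynomials in the $S^1$-weights. The semifree case corresponds to the situation in which all nonzero weights equal $\pm 1$, and ABBV applied to powers of the equivariant symplectic class and to equivariant Chern classes produces $H^*(M) = \Z[x]/(x^{n+1})$ together with $c(M) = (1+x)^{n+1}$. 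In the non-semifree case, restricting the action to each finite $\Z_p \subset S^1$ and re-running the same analysis on the symplectic components of $M^{\Z_p}$ (each inheriting a Hamiltonian residual action) forces every nontrivial finite stabilizer to be $\Z_2$ and its fixed locus to be a codimension-$2$ symplectic submanifold; this rigidity in turn forces $\dim X = \dim Y$, $n$ odd, and the cohomology ring (\ref{mb}), with $c(M) = (1+x)^{n+2}/(1+2x)$ emerging from a final ABBV computation that incorporates the $\Z_2$-stratum.

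The main obstacle is the integrality step: promoting Poincar\'e polynomial and equivariant restriction data to the claim that $X$, $Y$, and $M$ all have torsion-free integral cohomology generated by the stated classes, with the stated Chern polynomials. Delicate ABBV bookkeeping against the equivariant formality identification is what makes this precise, and the rigid dichotomy between (A) and (B) emerges only in the finite-stabilizer analysis, where the integrality of weight products and the symplectic/Morse--Bott structure of each $M^{\Z_p}$ must be carefully balanced.
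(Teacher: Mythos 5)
First, a point of comparison: the paper does not prove Theorem~\ref{LT} at all --- it is imported from the reference \cite{LT}, and the present paper only uses it as input. So your proposal can only be measured against the proof in \cite{LT}, whose general framework (the moment map as a perfect Morse--Bott function, integral Kirwan injectivity, ABBV localization, and an analysis of the isotropy weights on the normal bundles) you have correctly identified.

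That said, the proposal has genuine gaps at exactly the points where the theorem is hard. (1) The claim that $P_t(M)=P_t(X)+t^{2(i+1)}P_t(Y)$ plus Poincar\'e duality ``immediately yields $b_{2k}(X)=b_{2k}(Y)$ in every degree'' is false: for $M=\CP^n$ with $X=\CP^{n-1}$ and $Y$ a point one has $b_2(X)=1\neq 0=b_2(Y)$. What duality actually gives is $b_k(X)=b_{2j-k}(Y)$ in a restricted range, and even the statement that all even Betti numbers equal $1$ is nontrivial --- the introduction explicitly warns that (\ref{dim=}) does not formally imply minimal Betti numbers. (2) Even granted minimal Betti numbers, the ring structure does not follow from ``a dimension count'': $S^2\times S^4$ has the Betti numbers of $\CP^3$ but not its ring, so the assertion that $H^*(X)$ and $H^*(Y)$ are \emph{generated} by the symplectic classes is precisely what must be proved, and ``$H^*_{S^1}(M)\cong H^*(M)\otimes\Z[t]$ rules out torsion'' is circular, since that isomorphism merely transports whatever torsion $H^*(M)$ already has. (3) The dichotomy between (A) and (B) --- that the weights must be $(1,\dots,1)$ or $(2,\dots,2,1)$, forcing $\dim X=\dim Y$, $n$ odd, and the $\Z_2$-stratum of codimension $2$ --- is the combinatorial heart of the theorem; ``re-running the same analysis'' on the components of $M^{\Z_p}$ does not literally apply, because those components need not satisfy the hypotheses of the theorem (their fixed-point sets and dimension counts are different), and the actual argument is a delicate weight and Chern-class bookkeeping via localization. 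In short: you have the right toolbox, but the three decisive steps are named rather than carried out, and one intermediate claim is wrong.
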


Now, we give a short outline of the proof of our results.

\medskip

We prove Theorem~\ref{A} in Section~\ref{pi1thmsection}. We first
show that $\pi_1(X)=\pi_1(Y)=1$, and then a simple Seifert-van
Kampen argument  (or alternatively the main theorem of \cite{L}) 
shows that $\pi_1(M)=1$.

To prove  $\pi_1(Y)=1$, we consider the symplectic reduced space
$M_r = \phi^{-1}(r)/S^1$ at any regular value $r$ of the moment map $\phi$. The space $M_r$
is a weighted complex projective bundle over both $X$ and $Y$ at the
same time. We show that the weighted complex projective spaces that
actually arise are  homeomorphic to a smooth complex projective
space. We then show that the inclusion of the fiber of $M_r$ as a
bundle over $X$ composed with the projection down to $Y$ induces an
isomorphism on top cohomology groups, and this allows us to obtain
$\pi_1(Y)=1$. Similarly $\pi_1(X)=1$.\\

The short Section~\ref{corolsection} contains the proof of Theorem~\ref{hcp}
which is a consequence of Theorem~\ref{A} and (\ref{hcx}), (\ref{hcy}), (\ref{ma}).\\

We prove Theorem~\ref{equiv.diffeo} in Section~\ref{eqthmsection}.
The idea is to glue a tubular neighborhood of $X$ and a tubular
neighborhood of $Y$ along a regular level set of the moment map.
We prove that the equivariant diffeomorphism
types of the tubular neighborhoods are determined up to finite
ambiguity, and that there are finitely many essentially different
ways of gluing. In particular, in the proof of the latter,  we use the surgery exact sequence and
related techniques, and here we need $\dim(M) > 6$, so we need to treat the 6-dimensional case separately. For the case
when $\dim(M) > 6$ and the action is semifree, one can adopt K. Wang's proof \cite{Wa1} which uses gluing
along the smooth quotient of a regular level set of the moment map.
We will give a proof for the case when  $\dim(M) > 6$ and the action is not semifree
using equivariant gluing along the level set itself; and we give a proof for the case when $\dim(M)=6$.\\

In the Appendix, we prove the uniqueness results mentioned in Remark~\ref{lowdim}.
 As we have mentioned in the Introduction, when $\dim(M)=
 2$ or $4$, $M$ is respectively equivariantly symplectomorphic to
$\CP^1$ and to $\CP^2$. When $\dim(M)= 6$,  either one of $X$ and
$Y$ is  isolated or both $X$ and $Y$ are $2$-spheres; when
 the action is semifree, by Delzant's theorem and by Gonzalez's result,  $M$ is
 equivariantly symplectomorphic to $\CP^3$ with a standard action
 (\ref{standard}).
 For the case of non-semifree actions, we use Kreck's modified surgery
theory and a computation of the relevant bordism group by F. Fang and J. Wang.

\section{Proof of Theorem~\ref{A}}\label{pi1thmsection}
\subsection{Degree one maps from a simply connected topological manifold}
\
\smallskip

The proof of Theorem~\ref{A} will use the following result which is due to Olum.
\begin{lemma}\label{orc} (Olum)
Let $P$ and $X$ be closed oriented topological $n$-manifolds, and
assume that $P$ is simply connected. If there exists a map $f\colon
P\rightarrow X$ such that $f^*\colon H^{n}(X; \Z)\to H^{n}(P; \Z)$
is an isomorphism, then $\pi_1(X) = 1$.
\end{lemma}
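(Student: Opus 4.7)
The plan is to use Lemma~\ref{rorc} together with the fact that a map from a simply connected space to $X$ lifts through the universal cover of $X$. Let $\pi \colon \tilde X \to X$ denote the universal cover. Since $P$ is simply connected, the map $f \colon P \to X$ admits a lift $\tilde f \colon P \to \tilde X$ with $f = \pi \circ \tilde f$, so on cohomology
\[
f^* = \tilde f^* \circ \pi^* \colon H^n(X;\Z) \longrightarrow H^n(\tilde X;\Z) \longrightarrow H^n(P;\Z).
\]
Since $X$ is closed oriented, $H^n(X;\Z) \cong \Z$, and the hypothesis says $f^*$ is an isomorphism onto $H^n(P;\Z) \cong \Z$.

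First I would rule out the case that $\pi_1(X)$ is infinite. In that case Lemma~\ref{rorc} gives $H^n(\tilde X) \otimes \Q = 0$, so after tensoring the factorization above with $\Q$ the map $f^* \otimes \Q$ is forced to be zero. This contradicts $f^*$ being an isomorphism between groups whose rationalizations are $\Q$. Hence $\pi_1(X)$ must be finite.

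Next, assuming $\pi_1(X)$ is finite of order $k$, Lemma~\ref{rorc} says $\pi^* \colon H^n(X;\Z) \to H^n(\tilde X;\Z)$ is multiplication by $\pm k$ (both groups being $\Z$, since $\tilde X$ is now a closed oriented $n$-manifold). From the factorization $f^* = \tilde f^* \circ \pi^*$, an isomorphism $\Z \to \Z$ that factors through multiplication by $\pm k$ forces $k = 1$. Therefore $\pi_1(X) = 1$.

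The only step that requires any care is the lifting: one must check that $P$ being simply connected (and locally nice, which holds for a topological manifold) guarantees the lift $\tilde f$ exists. After that, everything is a direct application of Lemma~\ref{rorc}, so there is no serious obstacle.
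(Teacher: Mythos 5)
Your proposal is correct and follows essentially the same route as the paper: lift $f$ through the universal cover using $\pi_1(P)=1$, rule out infinite $\pi_1(X)$ via the rational vanishing of $H^n(\Tilde X)$ from Lemma~\ref{rorc}, and then use the degree $\pm\,|\pi_1(X)|$ statement to force the factorization of an isomorphism of $\Z$ through multiplication by $\pm k$, hence $k=1$. The only cosmetic difference is that you invoke $H^n(\Tilde X;\Z)\cong\Z$ explicitly, whereas the paper only needs that $f^*(1)$ is divisible by $k$; both are fine.
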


The proof of Lemma~\ref{orc} uses the following lemma whose proof we omit here --- it is a routine consequence of standard results about
fundamental classes (see for example \cite[Sections 16.3 and 16.4]{tD}).
\begin{lemma}\label{rorc}
Let $X$ be a closed orientable topological $n$-manifold and $\pi\colon \Tilde
X\rightarrow X$ the universal covering map. If $\pi_1(X)$ is
infinite then $H^n(\Tilde X;\Z)\otimes {\mathbb Q}=0$.
If $\pi_1(X)$ is finite then $\pi^*
\colon H^n(X;\Z)\rightarrow H^n(\Tilde X;\Z)$ is a degree $\pm
\,|\pi_1(X)|$ map.
\end{lemma}

\begin{proof}[Proof of Lemma~\ref{orc}]
Since $P$ is simply connected, the map $f$ factors through the
universal covering map $\pi\colon \Tilde X \rightarrow X$, giving us
a commutative diagram:
$$
\xymatrix
{
& \Tilde X \ar[d]^{\pi}\\
 P \ar[r]_f \ar[ur]^{\Tilde f} & X. }
$$
Taking $H^{n}$ gives
$$
\xymatrix
{
& H^{n}(\Tilde X; \Z) \ar[dl]_{\Tilde f^*}\\
\Z \cong H^{n}(P; \Z) & H^{n}(X; \Z)\cong\Z \ar[u]_{\pi^*}
\ar[l]^{f^*}, }
$$
i.e.,  $f^*=\Tilde f^*\pi^*$.

If $\pi_1(X)$ is infinite then by Lemma~\ref{rorc}, $H^{n}(\Tilde
X;\Z)\otimes {\mathbb Q} =0$ and so
$f^*\otimes{\mathbb Q}=0$, giving a contradiction. So we know that
$\pi_1(X)$ is finite and then again by Lemma~\ref{rorc}, and using
the isomorphisms of  $H^{n}(P; \Z)$  and $H^{n}(X; \Z)$ with $\Z$,
we get
$$
f^* (1) = \Tilde f^* \pi^* (1) = \Tilde
f^*\left(\pm\,|\pi_1(X)|\right) = \pm\, |\pi_1(X)|.
$$
Since $f^*$ is an isomorphism,  $|\pi_1(X)|=1$, hence, $\pi_1(X) =1$.
\end{proof}

\subsection{Weighted projective spaces}
\
\medskip

Let $S^1$ act on $\C^{k+1}$  with  weight vector $w= (w_1,
\dots, w_{k+1})$, where $w_i\in \N$ for all $i$, i.e., let $S^1$ act
by
\begin{equation}\label{eqaction}
\lambda\cdot (z_1, \dots, z_{k+1}) = (\lambda^{w_1}z_1, \dots,
\lambda^{w_{k+1}} z_{k+1})
\end{equation}
for each $\lambda\in S^1$. This action preserves the unit sphere
$S^{2k+1}\subset \C^{k+1}$. The orbifold $\CP^k_w = S^{2k+1}/S^1$ is
called a {\em weighted projective space}.

When $w=(1, \dots, 1)$, $\CP^k_w$ is the smooth projective space
$\CP^k$.

\begin{lemma}\label{221}
The pair consisting of the weighted projective space $\CP^k_{w}$ with $w=(2,\dots, 2, 1)$ and its subspace of singular points
is homeomorphic to the pair $(\CP^k,\CP^{k-1})$ consisting of the smooth projective space and a projective hyperplane.
\end{lemma}

\begin{proof}
We define an equivariant map $S^{2k+1} \to S^{2k+1}$, where the
$S^1$-action on the first sphere has weight vector $w=(2,\dots, 2,
1)$, and the $S^1$-action on the second sphere has weight vector
$w=(2,\dots, 2, 2)$, by the formula
$$(z_1,\dots z_k,z_{k+1})\mapsto \left(z_1,\dots, z_k,\frac{z_{k+1}^2}{|z_{k+1}|}\right).$$
It is easy to check that this map induces a homeomorphism on the
quotient spaces. It maps the singular subspace of $\CP^k_{w}$, that is, all points with vanishing last coordinate,
to the hyperplane in $\CP^k$ given as the vanishing set of the last coordinate.
\end{proof}

\begin{remark}
In general, a weighted projective space $\CP^k_{w}$ with $k > 1$ and
weight vector $w$ may not be homeomorphic to a smooth projective
space. Its integral cohomology may have a twisted ring structure, see \cite{Ka}.
Here and elsewhere in this paper, the cohomology of a weighted projective space is
its singular cohomology as a topological space.
\end{remark}

\subsection{The Duistermaat--Heckman theorem and the Euler class of circle bundles}
\
\medskip

In this section, we  recall the Duistermaat--Heckman Theorem for
the case of Hamiltonian circle actions, and we  address the Euler
class of circle bundles. In particular we explain the
Duistermaat--Heckman Theorem when the circle action has
non-trivial
finite stabilizer groups and obtain Lemma~\ref{wpj}.\\

Let $\Lambda\subset \R$ be any lattice. An $\R/\Lambda$-principal
bundle over a topological space $M$ is described by a \v{C}ech
cocycle which is a continuous $\R/\Lambda$-valued function, and its
Euler class is the image under the isomorphism
$\check{H}^1\left(M;\underline{\R/\Lambda}\right)\cong H^2(M;\Lambda)$.
Sometimes, we will consider the image of the Euler class under the
natural map $H^2(M; \Lambda)\to H^2(M; \R)$ induced by the inclusion
$\Lambda\hookrightarrow \R$.

Let $S^1=\R/\Z$ act on a connected symplectic manifold $(M, \omega)$
with proper moment map $\phi\colon M\to \R$.  Let
$r\in\mbox{image}(\phi)$ be a fixed regular value. Let $I$ be a
connected open interval of regular values of $\phi$ such that $r\in
I$. Then $\phi^{-1}(I)$ is $S^1$-equivariantly diffeomorphic to
$\phi^{-1} (r)\times I$ (in the latter, the moment map is the
projection to $I$). Hence, for any value $a\in I$, $\phi^{-1} (a)$
is $S^1$-equivariantly diffeomorphic to $\phi^{-1} (r)$, and so the
symplectic reduced space $M_a = \phi^{-1} (a)/ S^1$ (possibly a
symplectic orbifold) is diffeomorphic to $M_r = \phi^{-1} (r)/ S^1$.
This gives us a way to identify the cohomology groups of $M_a$ with those
of $M_r$. Since two such trivializations induce homotopic
diffeomorphisms, the identification of $H^*(M_a)$ and of $H^*(M_r)$
is canonical.

The finite stabilizer groups on each $\phi^{-1} (a)$, $a\in I$ are the same as those on $\phi^{-1} (r)$. Let
$m$ be the least common multiple of the orders of all the finite stabilizer groups on $\phi^{-1} (r)$. Then
$Z_r = \phi^{-1} (r)/\Z_m$  is a principal $S^1/\Z_m$-bundle over $M_r$:
\begin{equation}\label{orbundle}
S^1/\Z_m \hookrightarrow  Z_r \to  M_r.
\end{equation}
This is not always a bundle of smooth manifolds. For our purposes,
it is sufficient to consider it as a principal circle bundle in the
category of topological spaces. Let $\Z$ be the integral lattice of
$S^1$, then $\Lambda'=\frac 1m\, \Z \subset \R$ is the integral
lattice of $S^1/\Z_m$, i.e., $S^1/\Z_m\cong \R/\Lambda'$. Let
$e(Z_r)\in H^2 (M_r; \Lambda')$ be the Euler class of the bundle
(\ref{orbundle}). As a topological invariant, $e(Z_r)$ is constant
as a function over $I$, hence we will simply use $e$ to denote this
class.

\begin{theorem}\label{DH} (Duistermaat--Heckman \cite{DH})
 Let $a, b\in I$, where $I$ is a connected  open interval of regular values of a proper moment map $\phi$ of a   Hamiltonian circle action.  Let $\omega_a$ and $\omega_b$ be respectively the reduced symplectic forms in
 $M_a$ and in $M_b$ and $[\omega_a]$ and $[\omega_b]$  be the cohomology classes they represent.
 Then in $H^2(M_r;\R)$ we have
 $$[\omega_b] - [\omega_a] = e \cdot (b-a).$$
 Here, $e\in H^2(M_r; \Lambda')$  is the Euler class of (\ref{orbundle}), and we used the canonical identification
 of $H^2(M_a)$,  $H^2(M_b)$ and of  $H^2(M_r)$.
\end{theorem}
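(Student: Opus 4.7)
The plan is to run the standard de Rham proof of Duistermaat--Heckman, adapted to allow finite stabilizers on the level sets. Since the $S^1$-action on $\phi^{-1}(I)$ has only finite stabilizers, the generating vector field $\xi$ vanishes nowhere; averaging any Riemannian metric under $S^1$ and setting $\alpha := g(\xi,\cdot)/g(\xi,\xi)$ produces a smooth $S^1$-invariant 1-form on $\phi^{-1}(I)$ with $\alpha(\xi)=1$.

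Consider the 2-form
$$\Theta := \omega - d(\phi\,\alpha) = \omega - d\phi\wedge\alpha - \phi\, d\alpha,$$
which is closed and $S^1$-invariant. Using $\iota_\xi\omega = -d\phi$, $\iota_\xi\alpha=1$, $\iota_\xi d\phi=0$, and $\iota_\xi d\alpha = L_\xi\alpha-d\iota_\xi\alpha=0$, one checks $\iota_\xi\Theta=0$, so $\Theta$ is basic. It therefore determines a well-defined cohomology class in $H^2_{S^1}(\phi^{-1}(I);\R) \cong H^2(M_r\times I;\R)\cong H^2(M_r;\R)$. For any $a\in I$, the restriction $\Theta|_{\phi^{-1}(a)}$ equals $\omega|_{\phi^{-1}(a)} - a\,d\alpha|_{\phi^{-1}(a)}$, since $d\phi$ vanishes on the level set. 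Now $\alpha$ is $\Z_m$-invariant and pushes down to a connection 1-form $\bar\alpha$ on the principal $S^1/\Z_m$-bundle (\ref{orbundle}), whose curvature $d\bar\alpha$ represents (the image in $H^2(M_r;\R)$ of) the Euler class $e\in H^2(M_r;\Lambda')$. Hence after passing to the quotient, $\Theta|_{\phi^{-1}(a)}$ represents $[\omega_a] - a\cdot e$ in $H^2(M_r;\R)$.

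Because the inclusions $\phi^{-1}(a),\phi^{-1}(b)\hookrightarrow\phi^{-1}(I)$ are $S^1$-equivariantly homotopic through level sets of $\phi$, pulling back $[\Theta]$ along them yields the same class in $H^2(M_r;\R)$. Therefore $[\omega_a]-a\cdot e = [\omega_b]-b\cdot e$, which rearranges to the asserted $[\omega_b]-[\omega_a]=(b-a)\cdot e$.

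The main point requiring care is the lattice bookkeeping: one must verify that the de Rham class of $d\bar\alpha$ coincides, under the map $H^2(M_r;\Lambda')\to H^2(M_r;\R)$ induced by $\Lambda'\hookrightarrow\R$, with the \v{C}ech Euler class $e$ of the principal $\R/\Lambda'$-bundle (\ref{orbundle}). This is the standard Chern--Weil identification for abelian principal bundles, but with the twist that $\bar\alpha$ is normalized so that $\bar\alpha(\bar\xi)=1$ while $\bar\xi$ corresponds to $1\in\R$ rather than to the generator $\frac{1}{m}$ of $\Lambda'=\frac{1}{m}\Z$; the resulting factor of $1/m$ is precisely what places $[d\bar\alpha]$ in $H^2(M_r;\Lambda')$ rather than in $H^2(M_r;\Z)$, matching the conventions fixed before the statement.
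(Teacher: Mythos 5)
The paper offers no proof of this statement --- it is quoted from Duistermaat--Heckman with a citation --- and your argument is a correct rendition of the standard de Rham/Chern--Weil proof, properly adapted to the locally free setting via the invariant connection form $\alpha$ with $\alpha(\xi)=1$ and the basic closed form $\Theta=\omega-d(\phi\,\alpha)$. Your lattice bookkeeping for the $\R/\Lambda'$-bundle is consistent with the paper's conventions (it reproduces the factor $\frac1m$ appearing in Lemma~\ref{wpj}); the only residual ambiguity is the overall sign of $e$, which depends on orientation conventions and is immaterial since the paper only uses $e$ up to sign.
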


For the linear action (\ref{eqaction}) of $S^1$ with weight vector $w$
on $\C^n$ with the standard symplectic form, the moment map is
$$\phi = \frac12 \left( w_1 |z_1|^2 + \cdots + w_{k+1} |z_{k+1}|^2 \right),$$  and any
nonzero value of $\phi$ is a regular value. Since $S^{2k+1}$ is
$S^1$-equivariantly diffeomorphic to a regular level set of $\phi$,
$\CP^k_w$ is a symplectic reduced space at a regular value of $\phi$. Hence, we are in
the situation as described  in the Duistermaat--Heckman theorem. In
particular we obtain the following lemma.

\begin{lemma}\label{wpj}
Let $S^1$ act on $\C^{k+1}$ ($k > 0$) with weight vector $w = (w_1,
\dots, w_{k+1})$, where $w_i \in \N$ for $i=1, \dots, k+1$, and
$\gcd(w_1, \dots, w_{k+1}) =1$. Let $S^{2k+1}\subset \C^{k+1}$ and
let $\CP^k_{w} =S^{2k+1}/S^1$ be the weighted projective space. Let
$m=\mbox{lcm} (w_1, \dots, w_{k+1})$, and let $e$ be the Euler class
of the principal bundle
\begin{equation}\label{es}
S^1/\Z_m\hookrightarrow S^{2k+1}/\Z_m  \to \CP^k_{w}.
\end{equation}
Then in $H^2(\CP^k_{w}; \R)$ we have
$$e  = \pm \,\frac{1}{m} t,$$
where $t\in H^2(\CP^k_{w}; \Z)$ is a generator.
\end{lemma}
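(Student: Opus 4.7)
The plan is to apply Theorem~\ref{DH} to the linear $S^1$-action (\ref{eqaction}) on $\C^{k+1}$ equipped with its standard symplectic form, as suggested by the paragraph preceding the lemma. The moment map $\phi=\sum_j w_j|z_j|^2$ is proper with interval of regular values $(0,\infty)$, the reduction at any $a>0$ is the weighted projective space $\CP^k_w$, and the principal $(S^1/\Z_m)$-bundle appearing in Theorem~\ref{DH} is precisely the bundle (\ref{es}). Hence Theorem~\ref{DH} yields
$[\omega_a]-[\omega_b]=(a-b)\,e$ in $H^2(\CP^k_w;\R)$, i.e.\ $[\omega_a]=a\,e+C$ for some constant class $C\in H^2(\CP^k_w;\R)$.

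Next I would compute the top symplectic volume $\int_{\CP^k_w}\omega_a^k$ by differentiating the Liouville volume $\vol\{\phi\leq a\}\subset\C^{k+1}$ in $a$. After the substitution $r_j=|z_j|^2$, the sublevel set becomes a simplex and the volume computation reduces to an elementary calculation; in the standard normalization (matched to the weight-one Hopf case by the fact that the bundle $S^{2k+1}\to\CP^k$ has Euler class the integer generator of $H^2(\CP^k;\Z)$), one obtains $\int_{\CP^k_w}\omega_a^k=a^k/\prod_j w_j$. Expanding $[\omega_a]^k=(a\,e+C)^k$ and matching this as a polynomial in $a$ against $a^k/\prod_j w_j$ forces $C=0$ and $\int_{\CP^k_w}e^k=1/\prod_j w_j$.

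Since $H^2(\CP^k_w;\R)$ is one-dimensional, I may write $e=c\,t$ for some $c\in\R$, and I must show $c=\pm 1/m$. For this I would invoke the known integral cohomology ring of the weighted projective space (\cite{Ka}; cf.\ the remark after Lemma~\ref{221}), according to which the integer generator satisfies $\int_{\CP^k_w}t^k=m^k/\prod_j w_j$. Together with $\int_{\CP^k_w}e^k=1/\prod_j w_j$ this gives $(c\,m)^k=1$, hence $c=\pm 1/m$, proving the lemma.

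The main obstacle is the final step: relating the real class $e$ extracted from Duistermaat-Heckman to the integer generator of $H^2(\CP^k_w;\Z)$. Because the ring structure of $H^*(\CP^k_w;\Z)$ is twisted for general $w$, the integer generator is not proportional to any naive Fubini-Study-type rational class, so one has to appeal to Kawasaki's formula (or equivalently, to an argument identifying the line bundle associated to the $(S^1/\Z_m)$-bundle with the generator of the Picard group of $\CP^k_w$) in order to pin down the exact normalization.
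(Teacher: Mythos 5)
Your argument is essentially correct, but it takes a genuinely different route from the paper's. The paper's proof never touches symplectic volumes: it rescales the structure group via $S^1\cong S^1/\Z_m$ so that (\ref{es}) becomes an $\R/\Z$-bundle, quotes Kawasaki only for the single fact $H^2(S^{2k+1}/\Z_m;\Z)=0$ (a lens-complex computation, which is where $m=\mathrm{lcm}(w_1,\dots,w_{k+1})$ enters), and then reads off from the Gysin sequence $H^0(\CP^k_w;\Z)\to H^2(\CP^k_w;\Z)\to H^2(S^{2k+1}/\Z_m;\Z)=0$ that the integral Euler class of the rescaled bundle is a generator $\pm t$; returning to the lattice $\Lambda'=\frac1m\Z$ divides this by $m$ and gives $e=\pm\frac1m t$ in $H^2(\CP^k_w;\R)$. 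Your route instead runs Duistermaat--Heckman on $\C^{k+1}$ to get $[\omega_a]=a\,e$ and a Liouville-volume computation to get $\int_{\CP^k_w}e^k=1/\prod_j w_j$; this pins down $e$ as a real class, but the entire content of the lemma --- the comparison with the \emph{integral} generator $t$ --- is then carried by the identity $\int_{\CP^k_w}t^k=m^k/\prod_j w_j$, which you flag yourself as the main obstacle and cite to Kawasaki without deriving. That identity is in fact correct and closable: it follows from Kawasaki's description of the ring $H^*(\CP^k_w;\Z)$ inside $H^*(\CP^k_w;\Q)$ together with the observation that the generator of $H^{2k}$ evaluates to $\pm1$ on the fundamental class, or more transparently from your alternative suggestion that the line bundle associated to (\ref{es}) is $\mathcal{O}(\pm m)$, the generator of $\mathrm{Pic}(\CP^k_w)\cong H^2(\CP^k_w;\Z)$ --- but note that this last observation already yields the lemma directly and is essentially the paper's Gysin argument in algebro-geometric clothing. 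So your proof works, at the cost of using strictly more of Kawasaki's paper (the full ring structure rather than one group) plus a volume computation; the Duistermaat--Heckman input you front-load into the lemma is exactly what the paper applies only afterwards, when Lemma~\ref{wpj} is combined with Theorem~\ref{DH} in the proof of Theorem~\ref{A}.
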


\begin{proof}
Let us consider first the bundle as a principal $S^1=\R/\Z$-bundle
via the isomorphism $S^1\to S^1/\Z_m$,
and compute its Euler class in $H^2(\CP^k_{w};\Z)$.
Kawasaki's computation \cite{Ka} of the integral cohomology of $S^{2k+1}/\Z_m$
gives $H^2(S^{2k+1}/\Z_m;\Z)=0$. In the Gysin sequence
$$ H^0(\CP^k_{w};\Z)\to H^2(\CP^k_{w};\Z)\to H^2(S^{2k+1}/\Z_m;\Z)$$
the first map sends 1 to the Euler class and must be surjective.
Thus the Euler class is a generator of $H^2(\CP^k_{w};\Z)$.
Considering our original $S^1/\Z_m=\R/\Lambda'$-bundle, it follows that the Euler class $e$ of (\ref{es}) defines a
generator in $H^2(\CP^k_{w}; \Lambda')$.  Here $\Lambda'$ is the
lattice of $S^1/\Z_m$. As elements of $H^2(\CP^k_{w}; \R)$, $e$ is
$\frac{1}{m}$ times a generator $\pm t$ of $H^2(\CP^k_{w}; \Z)$, where
$\Z$ is the lattice of $S^1$. Hence, we have our conclusion.
\end{proof}

\subsection{Consequences of our assumptions}
\
\medskip

In this section, we look at several essential ingredients for the proof of Theorem~\ref{A}
which are consequences of our assumptions. \\

First, as we saw in Theorem~\ref{LT}, the circle action was classified in \cite{LT} into two cases. Now we
state the two cases in terms of weights of the $S^1$ action on the normal
bundles of the fixed sets, $X$ and $Y$, where $\phi(X) < \phi(Y)$.
\begin{lemma}\label{action}
Under Assumption~\ref{conditions}, $S^1$ acts on the
 normal bundles $N_X$ of $X$ and $N_Y$ of $Y$  as in the following two cases:
\par
 ({\bf A}) $S^1$ acts on the fiber of $N_X$ with  weights $(1, \dots, 1)$, and acts on the fiber of $N_Y$
 with weights $(-1, \dots, -1)$;
\par
 ({\bf B}) $\dim(X)=\dim(Y)$, and $S^1$ acts on the fiber of $N_X$ with weights $(2,
\dots, 2, 1)$ and acts on the fiber of $N_Y$
 with weights $(-2, \dots, -2, -1)$.
\end{lemma}

Let  $\dim(X)=2i$ and $\dim(Y)=2j$.  Since $2j +2 =\dim(M)-\dim(X)$,
the normal bundle $N_X$ of $X$ has complex rank $j+1$. Hence $N_X$
is $S^1$-equivariantly diffeomorphic to a $\C^{j+1}$ bundle over
$X$, with the circle acting on $N_X$ fiberwise on $\C^{j+1}$. By
Lemma~\ref{action}, $S^1$ acts on $\C^{j+1}$ with either weight vector
$w=(1, \dots, 1)$ or weight vector $w=(2, \dots, 2, 1)$. Let $r$ be
a fixed regular value of $\phi$, then $\phi^{-1}(r)$ is
$S^1$-equivariantly diffeomorphic to an $S^{2j+1}$ bundle over $X$,
and hence $M_r =\phi^{-1}(r)/S^1$ is diffeomorphic to a $\CP^j_w$
bundle over $X$. Similarly, by looking at the normal bundle $N_Y$
of $Y$, $M_r$ is  diffeomorphic
to  a $\CP^i_{w'}$ bundle over $Y$.

Now, let us consider the $S^1$-orbibundle
$$S^1 \hookrightarrow \phi^{-1}(r) \to M_r.$$
If we restrict this bundle to the fiber $\CP^j_w$ of $M_r$, viewed
as a bundle over $X$,  we
 have  the Hopf fibration or the ``weighted Hopf fibration''
$$ S^1  \hookrightarrow  S^{2j+1}\to  \CP^j_w. $$
 Lemma~\ref{e(P)} below follows from Lemma~\ref{wpj}.
  \begin{lemma}\label{e(P)}
  In the case of Lemma~\ref{action}, let $\dim(Y)=2j$,
  let $w=(w_1, \dots, w_{j+1})$ be the weight
  vector of the $S^1$ action on $N_X$
  and let $m=\mbox{lcm} (w_1, \dots, w_{j+1})$.
 For a fixed value $r\in\left(\phi(X), \phi(Y)\right)$,
 let $e$ be the Euler class of the principal bundle
 \begin{equation}\label{bundle}
  S^1/\Z_m\hookrightarrow \phi^{-1}(r)/\Z_m\to M_r.
 \end{equation}
   Then
  $$e \,|_{\CP^j_w} = \pm \,t \quad\mbox{in case {\bf (A)},  and}
  \quad e \,|_{\CP^j_w}= \pm \,\frac{1}{2}\, t  \quad\mbox{in case {\bf (B)}},$$
   where
  $t\in H^2(\CP^j_w; \Z)$ is a generator.
   \end{lemma}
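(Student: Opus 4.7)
The plan is to reduce this directly to Lemma~\ref{wpj} by identifying the restricted bundle as the standard one coming from the linear action on $\C^{j+1}$. Here is how I would organize it.

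First I would unpack the geometric picture near $X$. Since $X$ is the minimum of $\phi$, a tubular neighborhood of $X$ is $S^1$-equivariantly diffeomorphic to (a disk bundle inside) the normal bundle $N_X$, which is a complex rank $j+1$ bundle on which $S^1$ acts fiberwise with weight vector $w = (1,\dots,1)$ in case {\bf (A)} and $w = (2,\dots,2,1)$ in case {\bf (B)}. For $r$ close enough to $\phi(X)$, the level set $\phi^{-1}(r)$ is $S^1$-equivariantly diffeomorphic to the unit sphere bundle of $N_X$, with $S^1$-action fiberwise of the form (\ref{eqaction}) with weight vector $w$. The reduction $M_r = \phi^{-1}(r)/S^1$ is then the bundle over $X$ with fiber $\CP^j_w$, and by the Duistermaat--Heckman identification recalled before Theorem~\ref{DH}, the Euler class $e$ of (\ref{bundle}) is independent of the chosen $r \in (\phi(X),\phi(Y))$, so it suffices to compute $e|_{\CP^j_w}$ for $r$ near $\phi(X)$.

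Next I would observe that the restriction of the principal $S^1/\Z_m$-bundle
$$
S^1/\Z_m \hookrightarrow \phi^{-1}(r)/\Z_m \to M_r
$$
to a single fiber $\CP^j_w$ over a point $x \in X$ is precisely the bundle
$$
S^1/\Z_m \hookrightarrow S^{2j+1}/\Z_m \to \CP^j_w
$$
appearing in Lemma~\ref{wpj}, applied to the linear weighted $S^1$-action on the fiber $\C^{j+1}$ of $N_X$ at $x$. This identification is tautological: the preimage of the fiber $\CP^j_w$ in $\phi^{-1}(r)$ is the unit sphere $S^{2j+1}$ in that $\C^{j+1}$, and taking the $\Z_m$-quotient commutes with the restriction to a fiber.

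Finally I would invoke Lemma~\ref{wpj}, which gives
$$
e|_{\CP^j_w} = \pm\, \frac{1}{m}\, t
$$
in $H^2(\CP^j_w;\R)$, with $t$ a generator of $H^2(\CP^j_w;\Z)$. In case {\bf (A)} we have $w=(1,\dots,1)$ and $m = \mathrm{lcm}(1,\dots,1) = 1$, giving $e|_{\CP^j_w} = \pm\, t$. In case {\bf (B)} we have $w=(2,\dots,2,1)$ and $m = \mathrm{lcm}(2,\dots,2,1) = 2$, giving $e|_{\CP^j_w} = \pm\, \tfrac{1}{2}\, t$. There is no real obstacle here beyond the bookkeeping above; the content of the lemma is the matching of the globally defined Euler class on $M_r$ with the fiberwise computation already carried out in Lemma~\ref{wpj}.
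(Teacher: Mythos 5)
Your proposal is correct and follows the same route as the paper: the paper simply observes that restricting the orbibundle $S^1\hookrightarrow\phi^{-1}(r)\to M_r$ to a fiber $\CP^j_w$ of $M_r\to X$ yields the (weighted) Hopf fibration $S^1\hookrightarrow S^{2j+1}\to\CP^j_w$, and then states that the lemma follows from Lemma~\ref{wpj}. You have just filled in the same bookkeeping (constancy of $e$ over the interval of regular values, compatibility of the $\Z_m$-quotient with restriction to a fiber, and the evaluation $m=1$ or $m=2$) more explicitly.
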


We will also need the integral cohomology ring of $Y$ and the value
$\phi(Y)-\phi(X)$. See Propositions 6.1 and 8.15 and Lemmas 8.3 and 8.4 in \cite{LT}.

\begin{lemma}\label{XY}
Under Assumption~\ref{conditions}, assume in addition that $[\omega]\in H^2(M; \R)$ is a primitive
integral class. Then
$$ H^*(Y;\Z) = \Z[v]/v^{j  + 1},\quad\mbox{where $v = [\omega|_Y] $ and  $2j=\dim(Y) $}.$$
Furthermore, $\phi(Y)-\phi(X) =1$ when the action is semifree; and
$\phi(Y)-\phi(X) = 2$ otherwise.
 \end{lemma}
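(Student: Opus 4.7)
\medskip
\noindent\textit{Proof plan.} The plan is to combine the Duistermaat--Heckman Theorem~\ref{DH} with the Euler class computation of Lemma~\ref{e(P)} and the cohomological data from Theorem~\ref{LT}. First, fix a regular value $r\in(\phi(X),\phi(Y))$ and view $M_r$ simultaneously as the weighted projective bundle $\pi_X\colon M_r\to X$ with fiber $F=\CP^j_w$ and as the bundle $\pi_Y\colon M_r\to Y$. Using the equivariant symplectic normal form near the extremal critical submanifolds, the class $[\omega_r]$ extends to $\pi_X^*[\omega|_X]$ at $r=\phi(X)$ and to $\pi_Y^*[\omega|_Y]$ at $r=\phi(Y)$; together with Theorem~\ref{DH} this yields in $H^2(M_r;\R)$ the identity
\[
\pi_Y^*[\omega|_Y]-\pi_X^*[\omega|_X]=\bigl(\phi(Y)-\phi(X)\bigr)\cdot e.
\]
Restricting this identity to $F$, on which $\pi_X^*[\omega|_X]$ vanishes, and invoking Lemma~\ref{e(P)} together with $H^2(F;\Z)\cong\Z\cdot t$ from Lemma~\ref{221}, one obtains $(\pi_Y|_F)^*[\omega|_Y]=\pm(\phi(Y)-\phi(X))\,t$ in case (A) and $\pm\tfrac{1}{2}(\phi(Y)-\phi(X))\,t$ in case (B). Integrality of the left-hand side already forces $\phi(Y)-\phi(X)$ to be a positive integer (case A) or a positive even integer (case B).

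\medskip
To pin down the exact values $1$ and $2$, and simultaneously to identify $[\omega|_Y]$ with a generator of $H^2(Y;\Z)$, the next step uses primitivity of $[\omega]$ combined with a volume computation. By Theorem~\ref{LT}, $[\omega]=\pm x$ as integral classes, so the cohomology ring gives $\int_M\omega^n=\pm 1$ in case (A) and $\int_M\omega^n=\pm 2$ in case (B), where the factor of $2$ comes from the relation $x^{(n+1)/2}=2y$. On the other hand, the Duistermaat--Heckman volume formula reads
\[
\int_M\omega^n=n!\int_{\phi(X)}^{\phi(Y)}\vol(M_r)\,dr,
\]
where $\vol(M_r)=\int_{M_r}\omega_r^{n-1}/(n-1)!$ is a polynomial in $r-\phi(X)$ whose coefficients come from expanding $\omega_r=\pi_X^*\omega|_X+(r-\phi(X))e$ and performing fiber integrations involving the Chern classes of $N_X$ (supplied by Theorem~\ref{LT}). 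Matching the two expressions then pins down $\phi(Y)-\phi(X)=1$ in case (A) and $2$ in case (B), and forces $[\omega|_Y]$ to be a primitive generator of $H^2(Y;\Z)$, which may then be taken as the $v$ of Theorem~\ref{LT}.

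\medskip
The main obstacle will be the explicit evaluation of these fiber integrals: $\vol(M_r)$ has several terms involving Chern class data of $N_X$, and computing them cleanly requires the precise Chern class identities of Theorem~\ref{LT}. Propositions~6.1 and~8.1 of \cite{LT} carry out this bookkeeping in full.
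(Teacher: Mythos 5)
The paper does not actually prove this lemma: it is quoted from Propositions 6.1 and 8.1 of \cite{LT} (the sentence preceding the statement is the entire ``proof''), so there is no internal argument to compare yours against. Judged on its own terms, your sketch gets the easy half right and leaves the decisive half unproved. The identity $\pi_Y^*[\omega|_Y]-\pi_X^*[\omega|_X]=(\phi(Y)-\phi(X))\,e$ and its restriction to the fiber are exactly the computation the paper later performs as (\ref{DH''}) inside the proof of Theorem~\ref{A}, and your deduction that $\phi(Y)-\phi(X)$ is an integer in case (A) and an even integer in case (B) is sound (granting integrality of $[\omega|_Y]$, which holds since $[\omega]$ is integral).

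The gap is in the endgame. You claim that the single scalar identity $\int_M\omega^n=n!\int\vol(M_r)\,dr$, with left-hand side $1$ or $2$, simultaneously pins down the exact value of $\phi(Y)-\phi(X)$ and forces $[\omega|_Y]$ to be primitive. But the polynomial $\vol(M_r)$ depends on the a priori unknown divisibility of $[\omega|_X]$ in $H^2(X;\Z)$ as well as on the Chern classes of $N_X$, and the desired conclusion involves two further unknowns (the divisibility of $[\omega|_Y]$ and the length of the moment image); one polynomial identity does not obviously determine all of these, and you give no argument that it does. Moreover, you close by deferring the ``bookkeeping'' to Propositions 6.1 and 8.1 of \cite{LT} --- precisely the propositions from which the lemma is quoted, so the proposal ends by citing the statement it is supposed to prove. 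Two repairs are worth recording. First, primitivity of $[\omega|_Y]$ needs no volume computation: applying Morse--Bott theory to $-\phi$, whose minimum is $Y$ and whose other critical set $X$ has index $\dim(M)-\dim(X)=\dim(Y)+2$, which exceeds $3$ unless $Y$ is a point (when the claim is vacuous), the restriction $H^2(M;\Z)\to H^2(Y;\Z)$ is an isomorphism, so the primitive class $[\omega]$ restricts to a primitive class, and $v=[\omega|_Y]$ may be taken as the generator in (\ref{hcy}). Second, with primitivity of $[\omega|_X]$ and $[\omega|_Y]$ in hand, the value of $\phi(Y)-\phi(X)$ is obtained more directly by evaluating $[\omega]$ against the sphere swept out by a gradient trajectory joining $X$ to $Y$, as is done in \cite{LT}, rather than through the full Duistermaat--Heckman volume polynomial.
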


\subsection{Proof of Theorem~\ref{A}}
\
\medskip

Now, we are ready to prove Theorem~\ref{A}.

\begin{proof} [Proof of Theorem~\ref{A}]
First of all, by the assumption on $X$ and $Y$ and the compactness of $M$, $M$ has to be connected.

Next, we show simply connectedness.  Since by Theorem~\ref{LT} we have $H^2(M; \Z) = \Z$, then, modulo
rescaling, we may assume that $[\omega]$ is a primitive integral
class.

Let $a$ and $b$ be any two values such that $\phi(X) < a < b < \phi(Y)$. Let  $\omega_a$ and $\omega_b$ be
the reduced symplectic forms in $M_a$ and in $M_b$ respectively.   By Theorem~\ref{DH},
\begin{equation}\label{DH'}
[\omega_b] - [\omega_a] = e \cdot (b - a),
\end{equation}
where $e\in H^2(M_r)$ is the Euler class as in Lemma~\ref{e(P)}. Let
$2i=\dim(X)$ and $2j=\dim(Y)$. Since $M_r$ is a bundle over $X$ and
is also a bundle over $Y$, let
$$p_1\colon M_r\to X \quad \mbox{and} \quad p_2\colon M_r\to  Y $$
be the projection maps.     Let $u=[\omega|_X]$ and $v =
[\omega|_Y]$. By continuity of the classes of the reduced symplectic
forms,
$$p_1^*(u) = \lim_{a\to \phi(X)} [\omega_a] \quad\mbox{and}\quad
p_2^*(v) = \lim_{b\to \phi(Y)} [\omega_b].$$ Taking limit in
(\ref{DH'}), we obtain
\begin{equation}\label{DH''}
p_2^*(v) - p_1^*(u) = e \cdot \left(\phi(Y)-\phi(X)\right).
\end{equation}
Restricting (\ref{DH''}) to the fiber $\CP^j_w$ of $M_r$, viewed as
a bundle over $X$,  we get
$$ p_2^*(v)\,|_{\CP^j_w} = e\, |_{\CP^j_w} \cdot \left(\phi(Y)-\phi(X)\right).$$
By Lemma~\ref{e(P)} on $e
\,|_{\CP^j_w}$ and Lemma~\ref{XY}  on  $\phi(Y)-\phi(X)$,  we get
$$ p_2^*(v)\,|_{\CP^j_w} = \pm \,t,$$
where $t\in H^2\left(\CP^j_w; \Z\right)$ is a generator.  Hence
\begin{equation}\label{p2}
 p_2^*\left(v^j \right)\,|_{\CP^j_w} =\pm \,t^j.
\end{equation}
  Let
\begin{equation}\label{iotap}
f= p_2\circ  \iota\colon \,\, \CP^j_w
\stackrel{\iota}\hookrightarrow M_r\stackrel{p_2}\longrightarrow Y.
\end{equation}
 Then (\ref{p2}) is
$$ f^* (v^j) = \pm \,t^j.$$
By Lemma~\ref{XY}, $v^j\in H^{2j}(Y; \Z)$ is a generator. By
Lemmas~\ref{action} and \ref{221}, $H^*\left(\CP^j_w; \Z\right)=
\Z[t]/ t^{j+1}$, in particular, $t^j\in H^{2j} (\CP^j_w; \Z)$ is a
generator. Moreover, Lemmas~\ref{action} and \ref{221} also imply
that $\pi_1\left(\CP^j_w\right) = 1$. Now, Lemma~\ref{orc} allows us
to conclude
$$\pi_1(Y) = 1.$$ We can similarly prove $$\pi_1(X) = 1.$$ Decompose
$$M=\phi^{-1}\left(-\infty,r +\epsilon\right) \cup \phi^{-1}\left(r -\epsilon, +\infty\right).$$
Clearly, $\phi^{-1}\left(-\infty,r +\epsilon\right)$ deformation
retracts to $X$, and $\phi^{-1}\left(r - \epsilon, +\infty\right)$
deformation retracts to $Y$. Moreover, the intersection of the two
open sets is homotopy equivalent to $\phi^{-1}(r)$, which is
connected (we saw that it is diffeomorphic to an $S^{2j+1}$-bundle
over $X$). By the Seifert--van Kampen theorem,
 $$\pi_1(M)\cong \pi_1(X)*_{\pi_1\left(\phi^{-1}(r)\right)}\pi_1(Y)=1.$$
\end{proof}

\begin{remark}
For the last step of the proof, one can also refer to \cite{L}, where the first author proves that, for every Hamiltonian circle action on a connected compact symplectic manifold $(M,\omega)$ one has
$$\pi_1(M)\cong \pi_1\left(\mbox{minimum of  $\phi$}\right)\cong \pi_1\left(\mbox{maximum of  $\phi$}\right).$$
\end{remark}

\section{Proof of Theorem~\ref{hcp}}\label{corolsection}

We need Lemma~\ref{homotopyclassification} below to prove Theorem~\ref{hcp}.
\begin{lemma}\label{homotopyclassification}
Let $M$ be a compact simply-connected $2n$-dimensional manifold with cohomology ring
$H^*(M; \Z) \cong {\Z}[x]/(x^{n+1})$. Then
there is a homotopy equivalence  $\Tilde f\colon M\to \CP^n$ such that
$ \Tilde f^* (a) = x $, where $a\in H^2\left(\CP^n; \Z\right)$ is a generator.
\end{lemma}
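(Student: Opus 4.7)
The plan is to exploit the fact that $\CP^\infty$ is an Eilenberg--MacLane space $K(\Z,2)$, so the generator $x\in H^2(M;\Z)$ is represented by a canonical homotopy class of maps $g\colon M\to \CP^\infty$ with $g^*(a_\infty)=x$, where $a_\infty\in H^2(\CP^\infty;\Z)$ is the universal generator restricting to $a$ on $\CP^n$. Since $M$ is a closed $2n$-dimensional manifold, it has the homotopy type of a CW complex of dimension $2n$, and in the standard CW structure on $\CP^\infty$, $\CP^n$ is precisely the $2n$-skeleton. Cellular approximation then lets me homotope $g$ into $\CP^n$, producing a map $\tilde f\colon M\to \CP^n$ with $\tilde f^*(a)=x$.

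Once I have such a $\tilde f$, the multiplicativity of $\tilde f^*$ together with the presentations
\[
H^*(M;\Z)=\Z[x]/(x^{n+1}), \qquad H^*(\CP^n;\Z)=\Z[a]/(a^{n+1})
\]
forces $\tilde f^*(a^k)=x^k$ for each $k$, so $\tilde f^*$ is an isomorphism on all integral cohomology groups. Applying the universal coefficient theorem (both spaces have torsion-free, finitely generated cohomology), $\tilde f$ induces an isomorphism on integral homology as well. Since $M$ and $\CP^n$ are both simply connected CW complexes, Whitehead's theorem upgrades this to a homotopy equivalence.

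The only subtlety I anticipate is justifying the cellular/skeletal factorization cleanly: one must pick a CW model for $M$ of dimension $\le 2n$ and use the standard cell decomposition of $\CP^\infty$ in which the $2n$-skeleton is $\CP^n$. Alternatively, one can bypass this by noting that the obstructions to compressing $g$ into $\CP^n$ lie in $H^{k+1}(M;\pi_k(\CP^\infty,\CP^n))$ for $k\ge 2n+1$, which vanish by dimension, so $g$ is homotopic to a map landing in $\CP^n$. Either route is routine, and I would favor the cellular approximation argument for brevity. No other step presents a real difficulty; the heart of the lemma is simply that a degree-one cohomology ring isomorphism between simply connected Poincaré duality spaces of the same dimension is realized by a homotopy equivalence when the target is an Eilenberg--MacLane-related space like $\CP^n$.
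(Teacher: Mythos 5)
Your proposal is correct and follows essentially the same route as the paper: represent $x$ by a map to $\CP^\infty = K(\Z,2)$, compress it into $\CP^n$ for dimension reasons, observe that the induced map is a cohomology ring isomorphism, and conclude with Whitehead's theorem (using that $M$ has the homotopy type of a CW complex). The extra care you take with the compression step and the passage from cohomology to homology via universal coefficients only fills in details the paper leaves implicit.
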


\begin{proof}
Since $\CP^{\infty}$ is a $K(\Z, 2)$ space, there is up to homotopy a unique map $f\colon  M \to\CP^{\infty}$
such that $f^*(t)=x$, where $t\in H^2(\CP^{\infty};\Z)$ is the standard generator \cite[Theorem 4.57]{H}. Since $\dim
M=2n$, we may assume that $f$ factors through $\Tilde f$:
$$
\xymatrix {
& \CP^n \ar[d]^i \\
M \ar[ur]^{\Tilde f} \ar[r]_f & \CP^{\infty}. }
$$
Letting $a=i^*(t)$,  then $H^*(\CP^n;\Z)=\Z[a]/(a^{n+1})$. Since
$\Tilde f^* i^*=f^*$, it follows that $\Tilde f^*(a)=x$ and that
$\Tilde f^*$ is an isomorphism. Since all homology groups of $M$ and $\CP^n$ are finitely generated, the universal coefficient theorem shows that 
$\Tilde f$ also induces an isomorphism on all (integral) homology groups. Since $M$ and $\CP^n$ are simply
connected, by the Whitehead Theorem \cite[Corollary 4.33]{H},  the map $\Tilde f$ is a homotopy
equivalence. (Here we use that every topological manifold has the
homotopy type of a CW-complex. See \cite[Theorem V.1.6]{B}.)
\end{proof}

We recall that a homotopy complex projective space is said to have standard
Pontryagin classes if there exists a homotopy equivalence to $\CP^n$
which preserves the Pontryagin classes of the tangent bundles.

\begin{proof}[Proof of Theorem~\ref{hcp}]
By Theorem~\ref{A},  $X$, $Y$ and $M$ are  one-connected. The claim
for $X$ follows from (\ref{hcx}) and
Lemma~\ref{homotopyclassification}. The claim for $Y$ and for $M$
follows similarly by using (\ref{hcy}) and (\ref{ma}). In both cases the total Chern class is standard, so the total Pontryagin class is standard as well.
\end{proof}

\section{Finiteness up to $S^1$-equivariant diffeomorphism --- Proof of Theorem~\ref{equiv.diffeo}}
\label{eqthmsection}

In this section, we prove Theorem~\ref{equiv.diffeo}. We only need to restrict attention to the case when $\dim(M)=2n\geq 6$ since, as mentioned in the Introduction, when $\dim(M)=2$ or $4$, the manifold $M$ is unique up to equivariant symplectomorphism.

\subsection{Reducing the proof of Theorem ~\ref{equiv.diffeo} to an equivariant pseudo-isotopy classification} \label{51assumptions}
\
\medskip

In this section, we will see that we can reduce the proof of Theorem~\ref{equiv.diffeo} to the classification of equivariant pseudo-isotopy classes of equivariant self-diffeomorphisms of the sphere bundle of an extremum of the moment map.
We mostly use geometric topology techniques. So we assume that:
\begin{itemize}
\item
$M$ is a $2n$-dimensional compact smooth manifold with a smooth $S^1$-action,
\item
the fixed point set has two connected components $X$ and $Y$, with $\dim(X)=2i$ and $\dim(Y)=2(n-i-1)$, and
\item there is a Morse-Bott function $\phi\colon M\to\R$ with critical set $X\cup Y$. 
\end{itemize}
In the following paragraph, we summarize the facts we know and we will use about $M, X, Y$ and the circle action.
We use $\S$ to denote the sphere bundle of the normal bundle of $X$ and we denote by $\P$ the quotient $\S / S^1$.

The manifold $M$ is simply connected, and $X$ and $Y$ are homotopy complex projective spaces with standard Chern classes (hence also standard Pontryagin classes) (Theorem~\ref{A} and Theorem~\ref{hcp}).
Moreover, by Theorem~\ref{LT}, we have the following two cases.

\noindent Case $\bf (A)$, the action is semi-free. In this case, the fiber of the equivariant normal bundle of $X$ is an $(n-i)$-dimensional complex free $S^1$-representation space, so $S^1$ acts on $\S$ freely, and the quotient $\P$ is a smooth $\CP^{n-i-1}$-bundle over $X$. Moreover, the Chern classes of the normal bundles of $X$ and of $Y$ are uniquely determined.

\noindent Case $\bf (B)$, the action is not semifree. In this case, $\dim(X) = \dim(Y) = 2i$, $n=2i+1\geq 3$ is odd.
The equivariant normal bundle of $X$ splits into a direct sum $N_1\oplus N_2$, where the fibers of $N_1$ and of $N_2$ are respectively of complex dimensions $1$ and $n-i-1$. The Chern classes of $N_1$ and of $N_2$ are uniquely determined.
The circle acts on the fiber of $N_1$ with weight $1$, and acts on the fiber of $N_2$ with weights
$(2, \cdots, 2)$ (see Lemma~\ref{action}). So the sphere bundle $\S$ has two connected smooth strata:
the bottom stratum $\S_0=S(N_2)$ consisting of all points with stabilizer group $\Z_2$,
and the top stratum $\S\setminus \S_0$ consisting of all points with trivial stabilizer group.
The quotient $\P =\S_0/S^1\cup \big((\S\setminus \S_0)/S^1\big) =\P_0\cup (\P\setminus \P_0)$ is a bundle over $X$ with fiber  $\CP^{n-i-1}_{(2,\dots , 2,1)}$. By Lemma~\ref{221}, this fiber is homeomorphic to $\CP^{n-i-1}$. More precisely, the pair $(\P,\P_0)$ fibers over $X$ with fiber pair homeomorphic to $(\CP^{n-i-1}, \CP^{n-i-2})$. So $\P$ and $\P_0$ are topological projective bundles over a homotopy complex  projective space $X$; in particular, $\P\setminus \P_0$ is a bundle over $X$ with contractible fiber $\CP^{n-i-1}\setminus \CP^{n-i-2}$, so it is homotopy equivalent to $X$.
Note that the spaces $\P,\P_0$ and $\P\setminus\P_0$ are simply-connected, and
$\dim(\P_0) = 2n-4$ and $\dim(\P\setminus\P_0) = 2n-2$.\\

 First, we have finiteness up to equivariant diffeomorphism of the tubular neighborhoods of $X$ and $Y$:
\begin{lemma}
 If Assumption~\ref{conditions} holds, then up to $S^1$-equivariant diffeomorphism, tubular neighborhoods of $X$ and of $Y$ are determined up to finite ambiguity.
\end{lemma}

\begin{proof}
 Since $X$ and $Y$ are homotopy complex projective spaces with standard Pontryagin classes, by
Proposition~\ref{finite-hcp}, there are finitely many diffeomorphism types of $X$ and $Y$ if they are not 4-dimensional.
If $X$ or $Y$ is 4-dimensional, since it is a symplectic homotopy $\CP^2$ with standard 1st Chern class, by a theorem of Taubes \cite[Corollary 7.2]{Sm}, it is symplectomorphic to the standard $\CP^2$. (This is the only place in Section~\ref{eqthmsection} where we directly use the symplectic structure.)

By Theorem \ref{LT}, the Chern classes of the normal bundles of  $X$ and of $Y$ are uniquely determined
in the above two cases. Then Lemma~\ref{finite-bundle} implies that the normal bundles of $X$ and of $Y$ are determined up to finite ambiguity. Since the circle acts in the above two ways, we have finitely many possibilities up to equivariant diffeomorphism for the tubular neighbourhoods of $X$ and $Y$.
\end{proof}

Since the Morse-Bott function $\phi$ has no more critical sets, the manifold $M$ is obtained by gluing the tubular neighbourhoods of $X$ and $Y$.
Hence to prove Theorem ~\ref{equiv.diffeo}, it is enough to show that there are only finitely many ``different'' ways to glue the two tubular neighborhoods. A gluing is an equivariant diffeomorphism $\varphi$ between the sphere bundle
$\mathbb S_X$ of $X$ and the sphere bundle $\mathbb S_Y$ of $Y$. The result of the gluing is the manifold $\mathbb
D_X\cup_\varphi \mathbb D_Y$ obtained from the disjoint union of the
disk bundles $\mathbb D_X$ and $\mathbb D_Y$ of $X$ and $Y$, (identified with their tubular neighborhoods) by identifying the two boundaries via $\varphi$.
Hence, to prove Theorem~\ref{equiv.diffeo},  we only need to prove
Propositions~\ref{W} and \ref{pseudoisotopy finiteness} below.

\begin{definition}
Let $Z,Z'$ be two manifolds and let $\varphi_0,\varphi_1\colon Z\to Z'$ be two diffeomorphisms. A {\em pseudo-isotopy} between $\varphi_0$ and $\varphi_1$
is a diffeomorphism $\Phi\colon Z\times I\to Z'\times I$ such that $\Phi(z,0)=(\varphi_0(z),0)$ and
$\Phi(z,1)=(\varphi_1(z),1)$ for $z\in Z$.
If $Z, Z'$ are $S^1$-manifolds, and $\varphi_0,\varphi_1\colon Z\to Z'$ are two $S^1$-equivariant
diffeomorphisms,  an {\em $S^1$-equivariant pseudo-isotopy}  between $\varphi_0$ and $\varphi_1$
is an $S^1$-equivariant diffeomorphism $\Phi\colon Z\times I\to Z'\times I$ such that $\Phi(z,0)=(\varphi_0(z),0)$ and $\Phi(z,1)=(\varphi_1(z),1)$ for $z\in Z$. Here, $S^1$ acts on the first factors on $Z\times I$
and on $Z'\times I$.
\end{definition}

Proposition~\ref{W}  below was essentially proved by  K. Wang, and
we refer to \cite[Proposition 1.1]{Wa1}.   The proof applies to both
semifree and non-semifree actions.
\begin{proposition}\label{W}
Let $\varphi_0,\varphi_1\colon\S_X\to \S_Y$ be two equivariantly pseudo-isotopic diffeomorphisms. Then there exists
an equivariant diffeomorphism $$\mathbb D_X\cup_{\varphi_0} \mathbb D_Y \cong \mathbb D_X\cup_{\varphi_1} \mathbb D_Y$$
between the results of the two gluings. 
\end{proposition}
Since any two diffeomorphisms between the sphere bundles differ by a
self-diffeomorphism of one of the sphere bundles, we consider instead the group of pseudo-isotopy classes of
equivariant self-diffeomorphisms of one of the sphere bundles, say, over $X$, the minimum of $\phi$.

\begin{proposition}\label{pseudoisotopy finiteness}
The group of equivariant pseudo-isotopy classes of equivariant diffeomorphisms $\S\to \S$ is finite when
$\dim(M) = 2n \ge 6$.
\end{proposition}
We will prove Proposition~\ref{pseudoisotopy finiteness} in the following three subsections.

\begin{remark}
For us, the existence of a decomposition $\mathbb D_X\cup_{\varphi} \mathbb D_Y \cong M$ is sufficient.
Hausmann and Holm show that given the manifold $M$ with its symplectic structure and circle action, one can control the choices 
in the tubular neigbourhood embeddings $\mathbb D_X\to M$, $\mathbb D_Y \to M$ so that the diffeomorphism 
$\varphi$ is determined by $M$ up to isotopy and certain gauge transformations \cite[Lemma 4.3]{HH}.
\end{remark}

\subsection{The case when $\dim(M)=2n > 6$ and the action is semifree}
\
\medskip

The proof of Proposition~\ref{pseudoisotopy finiteness} for this case was essentially given in K. Wang's
paper \cite{Wa1}.  Because of some minor inaccuracies in Wang's
proof,  and since we want to extend  the argument to
non-semifree actions, we will give a description  of  his whole argument
in this section.

\bigskip

When the $S^1$ action is semifree,  an equivariant self-diffeomorphism of $\S$ induces a self-diffeomorphism of the smooth quotient $\P$.
The map $\pi\colon \S\to \P$ is a principal $S^1$-bundle and gives rise to an Euler class in $H^2(\P)$.
A self-diffeomorphism of $\P$ induced by an equivariant diffeomorphism of $\S$ preserves the Euler class.

On the other hand,  a diffeomorphism  $\P\to\P$ which preserves the Euler class can be lifted to an
equivariant  diffeomorphism $\S\to \S$. The lift is not unique, but two such lifts $f_0,f_1$
differ by an element $g\in \mbox{Maps}(\P, S^1)$, i.e.,  for $x\in \S$, one has $f_1(x)=f_0(x)\cdot g(\pi(x))$.
Since $\P$ is simply-connected, there is a homotopy $H\colon \P\times I \to S^1$ from $g$ to the constant map.
Then $\psi \colon \S\times I \to \S\times I$,
defined by $\psi (x,t)=\left(f_0(x)\cdot H\left(\pi(x), t\right),t\right)$ is an isotopy between $f_0$ and $f_1$.
Thus a self-diffeomorphism of $\P$ which preserves the Euler class can be lifted to a unique pseudo-isotopy
class of equivariant  self-diffeomorphisms of $\S$.

\medskip

  In the following statements, we will always assume the circle action to be semifree ($\P$ is smooth), we will however
  indicate the dimension of the manifold for the statements to hold.

\begin{lemma}\label{idonH}
Let $\dim(M)=2n$. For any $n$, in the group of self-diffeomorphisms of $\P$ which preserve the Euler class $e$ of the circle bundle $\S\to \P$, the subgroup of self-diffeomorphisms inducing the
identity on $H^*(\P;\Z)$ has finite index (which equals 1 or 2).
\end{lemma}

\begin{proof}
Suppose $\P$ is a projective bundle over a homotopy complex projective space of dimension $2i$. Let $x\in H^2(\P;\Z)$ be the image of the generator of $H^2$ of the
homotopy complex projective space. Then by the theorem of the projective bundle,
$$H^*(\P;\Z)\cong \Z[e,x]/\left\langle x^{i+1},p(e,x) \right\rangle,$$
where the polynomial $p$ is defined by $p(e,x)=\sum_k e^{n-i-k} (-1)^k c_k x^k$ for some $c_k\in \Z$.
Thus a $\Z$-basis of $H^*(\P;\Z)$ is given by the monomials $e^l x^k$ with $0\le l \le n-i-1$, $0\le k\le i$.
Any diffeomorphism $f$ must induce an automorphism of $H^2(\P;\Z)$.
Since $f^*(e)=e$, we have $f^*(x)=\pm x + \lambda e$ for some $\lambda\in \Z$.
Up to maybe restricting to a subgroup of index 2, we may assume $f^*(x)= x + \lambda e$.

It follows that in $H^*(\P;\Z)$ we have $0=f^*(x^{i+1})=f^*(x)^{i+1}=(x + \lambda e)^{i+1}$
and similarly $0=p(e,x+\lambda e)$. In the polynomial ring $\Z[e,x]$, we obtain the equality
$$\left\langle x^{i+1},p(e,x) \right\rangle=\left\langle (x+\lambda e)^{i+1},p(e,x+\lambda
e) \right\rangle.$$
We leave it to the reader to prove that this implies $\lambda=0$. (One can consider the cases
$\deg(p)>i+1$, $\deg(p)=i+1$, $\deg(p)<i+1$.)
It follows that $f^*=id$.
\end{proof}

\begin{lemma}\label{homotopy finiteness}
Let $\dim(M)=2n$. For any $n$, the group of homotopy classes of self-homotopy equivalences of $\P$ inducing the identity on cohomology is finite.
\end{lemma}
K. Wang refers to an argument of Kahn, but we give a short proof here
using obstruction theory. Recall its setting \cite[pages 415ff]{H}:
Given two CW-complexes $A,B$, and a map from a subcomplex $A_0$ of $A$ to
$B$, does this map extend to a map $A\to B$? For simplicity we
assume $B$ is simply-connected. Obstruction theory says such
an extension exists if a sequence of obstruction classes
\begin{equation}\label{obstr}
o_j\in H^{j+1}\big(A,A_0;\pi_j(B)\big) \quad \mbox{for $j=2,3,\dots$}
\end{equation}
vanishes. However, only the first obstruction (the $o_j$ where $j$ is minimal such that $\pi_j(B)\ne 0$)
is well-defined and depends only on the map $A_0\to B$; the higher obstructions $o_j$ are only defined if the lower
obstructions vanish, and they depend on choices. Let $A^{(j)}$ denote the $j$-skeleton of $A$.
Then $o_j$ depends on the choice of an extension of $A_0\to B$
to $ A_0\cup A^{(j-1)}$ which can be extended to $A_0\cup A^{(j)}$;
if the corresponding obstruction $o_j$ vanishes, then it can be extended to $A_0\cup A^{(j+1)}$.

\begin{proof}
Let $\mathcal H=\mathcal H(\P)$ denote the group of homotopy classes of self-homotopy equivalences of $\P$ inducing the identity on cohomology. For each $j$, let $\mathcal H_j$ be the group of those homotopy classes of self-homotopy equivalences $\varphi\colon \P\to \P$
inducing the identity on cohomology, such that there exists a homotopy between the restrictions of $\mbox{id}$ and $\varphi$
to $\P^{(j)}$. Obviously $\mathcal H_{j}\subseteq \mathcal H_{j-1}$ for all $j$, and $\mathcal H_{\dim(\P)}$ is the trivial group.

Obstructions for a homotopy between two maps $\P\to\P$ lie in $H^j\big(\P;\pi_j(\P)\big)$.
(We have $A=\P\times I, A_0=\P\times\partial I, B=\P$ in the notation above.)
Since the maps are equal on $H^2\left(\P; \Z\right)$, the first obstruction in $H^2\big(\P;\pi_2(\P)\big)$ is 0,
and one has a homotopy between the restriction of the maps to $\P^{(2)}$. Thus $\mathcal H=\mathcal H_2$.
The statement of the lemma will follow from the statement that for all $j>2$, the index of
$\mathcal H_{j}$ in $\mathcal H_{j-1}$ is finite, which we will now prove.

For $j>2$ the group $H^j\big(\P;\pi_j(\P)\big)$ is finite since the
cohomology of $\P$ is concentrated in even degrees and the other even
homotopy groups of $\P$ are finite. (This uses the long exact sequences in
homotopy for $S^1\to S^{2k+1}\to \CP^k$ and $\CP^k\to \P \to \CP^j$.)
There is no well-defined map $\mathcal H_{j-1}\to H^j\big(\P;\pi_j(\P)\big)$,
since the obstruction to the existence of a homotopy over $\P^{(j)}$
depends on the choice of a homotopy
over $\P^{(j-2)}$ which extends to $\P^{(j-1)}$.
However, we can associate to each element $\varphi\in \mathcal H_{j-1}$
the subset $\mathcal O_j(\varphi)\subseteq H^j\big(\P;\pi_j(\P)\big)$
consisting of all obstruction classes for all such choices.
Then $\mathcal H_j=\left\{ \varphi\in \mathcal H_{j-1} \mid 0\in \mathcal O_j(\varphi)\right\}$.

Now assume that $o_j\in \mathcal O_j(\varphi_1) \cap \mathcal O_j(\varphi_2)$,
corresponding to homotopies $h_1$ respectively $h_2$ between the
restrictions of $id$ and $\varphi_1$ respectively $\varphi_2$.
Then the homotopies $h_1$ and $h_2$ can be combined to a homotopy $h$
between the restrictions of
$\varphi_1$ and $\varphi_2$ to $\P^{(j-2)}$ which extends to $\P^{(j-1)}$, and the obstruction for extending $h$ to $\P^{(j)}$ is 0.
This follows from an additivity formula for the obstruction classes, see \cite[Theorem 4.2.7]{Ba}.
By composing with $\varphi_1^{-1}$ we see that $0\in \mathcal O_j(\varphi_1^{-1}\varphi_2)$ and that $\varphi_1^{-1}\varphi_2\in \mathcal H_j$.
We have shown that if $o_j\in \mathcal O_j(\varphi_1) \cap \mathcal O_j(\varphi_2)$, then $\varphi_1$ and $\varphi_2$ are contained
in the same coset modulo $\mathcal H_j$.
This shows that in $\mathcal H_{j-1}$ there are at most as many cosets modulo $\mathcal H_j$ as elements in $H^j\big(\P;\pi_j(\P)\big)$, so that
$\mathcal H_{j}\subseteq \mathcal H_{j-1}$ has finite index.
\end{proof}

\begin{proposition}\label{D0}
For  $\dim(\P)=2n-2 > 4$,  the group $\mathcal D_0(\P)$ of pseudo-isotopy classes of self-diffeomorphisms of $\P$
which are homotopic to the identity is finite.
\end{proposition}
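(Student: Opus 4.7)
The plan is to embed $\mathcal D_0(\P)$ into a smooth structure set and then control it by the Sullivan--Wall surgery exact sequence together with a rational-homotopy computation. Since $\P$ is a smooth $\CP^j$-bundle over the simply connected homotopy complex projective space $X$ (by Corollary~\ref{hcp}), $\P$ itself is simply connected with $\dim(\P) = 2n-2 \geq 6$, so the $h$-cobordism theorem applies.

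To each $[\varphi] \in \mathcal D_0(\P)$ I would associate an element of the structure set $\mathcal S(\P \times I, \partial)$ as follows: form the twisted cylinder $\P \times_\varphi I$ in which $\P \times \{1\}$ is reglued via $\varphi$, and use a chosen homotopy $\varphi \simeq \mathrm{id}$ to build a homotopy equivalence $\P \times_\varphi I \to \P \times I$ that is the identity on $\partial$. The $h$-cobordism theorem makes this assignment injective modulo pseudo-isotopy, because a trivialization of the structure yields a product structure on the cylinder, which is exactly a pseudo-isotopy from $\varphi$ to $\mathrm{id}$.

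The next step is the Sullivan--Wall surgery exact sequence
$$L_{2n}(1) \longrightarrow \mathcal S(\P \times I, \partial) \longrightarrow [\Sigma\P_+, G/O] \longrightarrow L_{2n-1}(1) = 0.$$
The group $[\Sigma\P_+, G/O]$ is finitely generated by the Atiyah--Hirzebruch spectral sequence, since $H^*(\P; \Z)$ is finitely generated and concentrated in even degrees (as $\P$ is a $\CP^j$-bundle over a homotopy $\CP^i$). Rationally $G/O \simeq \prod_{i \geq 1} K(\Q, 4i)$, so
$$[\Sigma\P_+, G/O] \otimes \Q \;\cong\; \bigoplus_{i \geq 1} H^{4i-1}(\P; \Q) \;=\; 0,$$
because $\P$ has cohomology only in even degrees. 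Hence $[\Sigma\P_+, G/O]$ is a finite group.

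The main obstacle is the possibly $\Z$ contribution from $L_{2n}(1)$, which is infinite cyclic when $n$ is even. I would argue that only finitely many multiples of its generator can occur in the image of $\mathcal D_0(\P)$: the infinite cyclic part of $L_{2n}(1)$ is detected by the signature defect of the relative structure, and a structure realized by a self-diffeomorphism homotopic to the identity has trivial signature defect (the twisted cylinder $\P \times_\varphi I$ is rationally indistinguishable from $\P \times I$ as a Poincar\'e pair), so the image lies in the finite torsion part of $\mathcal S(\P \times I, \partial)$. Combined with the injectivity of the Sullivan map, this yields the desired finiteness of $\mathcal D_0(\P)$.
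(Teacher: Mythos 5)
Your overall strategy is the same as the paper's: relate $\mathcal D_0(\P)$ to the structure set $S(\P\times I,\partial)$, then run the Sullivan--Wall sequence and use that $H^*(\P;\Z)$ is concentrated in even degrees to kill $[\Sigma\P, G/O]$ rationally. (The paper goes in the opposite direction --- it constructs a surjection $S(\P\times I,\partial)\to \mathcal D_0(\P)$ via the $s$-cobordism theorem, following K.~Wang, rather than an injection out of $\mathcal D_0(\P)$; for finiteness either direction would do, though your injection requires checking that the element of the structure set does not depend on the chosen homotopy $\varphi\simeq \mathrm{id}$, or at least that distinct pseudo-isotopy classes cannot share a value.)

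The genuine gap is in your treatment of the $L_{2n}(1)\cong\Z$ term when $n$ is even. You argue that the image of $L_{2n}(1)$ in $S(\P\times I,\partial)$ is controlled because ``a structure realized by a self-diffeomorphism homotopic to the identity has trivial signature defect.'' But \emph{every} element of the structure set is represented by a homotopy equivalence rel boundary and therefore has trivial signature defect; this invariant cannot detect anything, and in particular it does not show that the Wall-realization action of $L_{2n}(1)$ on $S(\P\times I,\partial)$ has finite image. By exactness, that image is $L_{2n}(1)/\mathrm{im}\bigl(\theta\colon [\Sigma^2\P_+,G/O]\to L_{2n}(1)\bigr)$, and one must actually prove this quotient (or at least the image in the structure set) is finite. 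The paper does this by invoking Novikov's theorem (\cite[Theorem 2.18]{MM}): every element in the image of $L_{2n}(1)\to S(\P\times I,\partial)$ is of the form $\P\times I\,\#\,\Sigma^{2n-1}\to \P\times I$ for a homotopy sphere $\Sigma^{2n-1}$, and the group of homotopy $(2n-1)$-spheres is finite for $2n-1\ge 5$. Equivalently one could show that the surgery obstruction map $[\Sigma^2\P_+,G/O]\to L_{2n}(1)$ is rationally surjective, using that $H^{2n-2}(\P;\Q)\ne 0$ pairs with the top Pontryagin--Hirzebruch component of $G/O$. Either of these replaces your signature-defect step; without one of them the argument does not close.
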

\begin{proof}
The argument involves the surgery exact sequence, and is parallel to \cite{Wa2}, Sections 1 and 2.
For the surgery exact sequence in general the reader may consult \cite{Wall} or \cite{Lueck}.

There is a map from the so-called structure set
$S(\P\times I,\partial)$ to $\mathcal D_0(\P)$ (in fact a group homomorphism):
an element in the structure set is represented by a homotopy equivalence
of manifolds with boundary $H\colon K\to \P\times I$ which restricts to a
diffeomorphism of the boundaries. Since $\dim(\P)>4$, we can apply the $s$-cobordism theorem:
we may assume that $K=\P\times I$, and furthermore we may assume that
$H|_{\P\times\{0\} }$ is the identity.
Now the map $S(\P\times I,\partial)\to \mathcal D_0(\P)$ is defined by sending
$H\mapsto\ H|_{\P\times\{1\} }$.
K. Wang shows that the map is well-defined and surjective \cite{Wa2}.

We also need the following surgery obstruction groups: the group $L_{2n-1}(1)$ is trivial,
and $L_{2n}(1)$ is cyclic of infinite order or order 2 (depending on whether $n$ is even or odd).
Moreover $[\Sigma \P, G/O]$ (respectively $[\Sigma^2 \P, G/O]$)
is the group of homotopy classes of maps from the suspension of $\P$
(respectively double suspension) to a certain space $G/O$.

For $\dim(\P) =2n-2 > 4$ there is a diagram:
\begin{equation}\label{sequence1}
\xymatrix@C-7pt{
[\Sigma^2 \P, G/O]
\ar[r]
&
L_{2n}(1)
\ar[r]
&
S(\P\times I,\partial)
\ar[r]
\ar[d]
&
[\Sigma \P, G/O]
\ar[r]
&
L_{2n-1}(1)=0
\\
& &
\mathcal D_0(\P)
.}
\end{equation}
Here the first row is the surgery exact sequence: since the spaces we consider
are products with an interval, this is in our case an exact sequence of groups. We use that $\P$ is simply connected (see beginning of Section \ref{51assumptions}).

By a theorem of Novikov \cite[Theorem 2.18]{MM}, every element in the image
of $L_{2n}(1)\to S(\P\times I,\partial)$ is of the form
$$\P \times I\, \# \,\Sigma^{2n-1} \stackrel{id\# c}{\longrightarrow}\P \times I\, \#\, S^{2n-1} = \P \times I,$$
where $\#$ is connected sum, $\Sigma^{2n-1}$ is an exotic sphere, and $c\colon \Sigma^{2n-1}\to S^{2n-1}$ is a homeomorphism.
Since the group of exotic spheres is finite in dimension at least 5,
the image of $L_{2n}(1)$ in $S(\P\times I,\partial)$ is finite.

We need the following facts about $G/O$: it is an infinite loop space with finite odd homotopy groups,
and $\pi_{4k}(G/O)\otimes \Q\cong \Q$. See for example \cite[p.215]{BV}, \cite[Theorem 6.48]{Lueck},
\cite{MM}.
Since $\Sigma \P$ has vanishing even reduced cohomology groups,
the Atiyah--Hirzebruch spectral sequence shows that $[\Sigma \P, G/O]$ is finite.
It follows that $S(\P\times I,\partial)$ and $\mathcal D_0(\P)$ are also finite.
\end{proof}

\begin{proof}[Proof of Proposition~\ref{pseudoisotopy finiteness}] (for the case when $\dim(M)=2n > 6$ and the action is semifree.)
In this case $S^1$ acts
freely on $\S$,  and we saw in the beginning of this subsection that pseudo-isotopy classes of equivariant
self-diffeomorphisms of $\S$ correspond exactly to pseudo-isotopy classes of self-diffeomorphisms of $\P$ which preserve the
Euler class  of the bundle $\S\to\P$.
Let $\mathcal D(\P)$ be the group of pseudo-isotopy classes of self-diffeomorphisms of $\P$ which
act as identity on $H^*(\P; \Z)$.  By Lemma~\ref{idonH}, it suffices to prove that $\mathcal D(\P)$
is finite. Let $\mathcal H(\P)$ be the group of homotopy classes of
self-homotopy equivalences of $\P$ which act as  identity on  $H^*(\P; \Z)$.
We have a natural map $\varphi\colon \mathcal D(\P)\to \mathcal H(\P)$ with $\ker(\varphi)= \mathcal D_0(\P)$.
By Lemma~\ref{homotopy finiteness} and Proposition~\ref{D0}, the groups $\mathcal H(\P)$ and $\ker(\varphi)$ are finite, thus $\mathcal D(\P)$
is finite.
\end{proof}

\subsection{The case when $\dim(M)=2n > 6$ and the action is not semifree}
\
\medskip

In this section, we prove Proposition~\ref{pseudoisotopy finiteness} for the case when $\dim(M)=2n > 6$ and the action is not semifree. Since $n$ is odd in this case,
we have $\dim(M)=2n \geq 10$. For the case of semifree
actions,  we worked with maps between the quotients of the group
actions,  and we used the $s$-cobordism theorem in the proof of
Proposition~\ref{D0}.  For the case of non-semifree actions,  the quotients are not canonically
smooth manifolds, so it is more convenient not to work with the
quotients but to consider equivariant maps instead.
However the $s$-cobordism theorem does not generalize to the
category of all equivariant maps, but only to the smaller category
of isovariant maps, which we describe below.  The appropriate
generalization of the surgery theory argument to the case of several
strata in Proposition~\ref{ds1} below uses the Browder--Quinn isovariant
surgery exact sequence. The original source for the isovariant surgery theory is the article by Browder and Quinn \cite{BQ};
Weinberger \cite{We} gives an outline of the proofs of the statements; some aspects are dicussed in great detail by Dovermann and Schultz \cite{DS}.

An equivariant map is {\em isovariant} if it preserves stabilizer groups of points;  it is {\em transverse linear isovariant}
if it is isovariant and is transverse to the lower strata. Every equivariant diffeomorphism is transverse linear isovariant.
(In the one stratum case, all equivariant maps are transverse linear isovariant.)

In the following, a homotopy equivalence $f\colon S_1\to S_2$ is a homotopy equivalence in the category
of transverse linear isovariant maps, i.e., $f$ is transverse linear isovariant,  there exists $g\colon S_2\to S_1$
which is transverse linear isovariant,  and,  there exist  transverse linear isovariant homotopies such that
$g\circ f\simeq \mbox{id}$ and $f\circ g\simeq\mbox{id}$.

\begin{remark}
For the general Browder-Quinn theory, one has to restrict further to maps whose restrictions to strata of dimension less than $5$ are equivariant diffeomorphisms, and homotopies whose restrictions to such strata are constant. In our case
when $\dim(M)=2n \geq 10$, the bottom stratum $\S_0$ of $\S$ has dimension at least $7$, and the bottom
stratum $\P_0$ of $\P$ has dimension at least $6$, so these restrictions do not apply.
\end{remark}

Corresponding to this case, our surgery argument which replaces Proposition~\ref{D0} concerns the group $\mathcal D^{S^1}_0(\S)$ of equivariant pseudo-isotopy classes of self-diffeomorphisms of $\S$ which are
transverse linear isovariantly homotopic to the identity.

\begin{proposition}\label{ds1}
When $\dim(M)=2n \geq 10$, $n$ is odd, the group $\mathcal D^{S^1}_0(\S)$ is finite.
\end{proposition}
\begin{proof}
Note that since $\dim(M)=2n \geq 10$, we have $\dim(\S)=2n-1\geq 9$.

Again we start by defining the various sets occurring in the isovariant surgery exact sequence:
$[\Sigma \P, G/O]$ is the group of homotopy classes of maps from the suspension of $\P$ to $G/O$ as before, so it is a finite set as before.
The set $S^{S^1}_0(\S\times I,\partial)$ consists of homotopy equivalences
$(N,\partial N)\to (\S\times I,\partial)$ in the category of transverse linear isovariant maps, modulo
concordance.
Since $\dim(\S)>5$, we can apply the isovariant $s$-cobordism theorem \cite{BQ}: $N$ is equivariantly diffeomorphic to a cylinder,
and the element is represented by a homotopy equivalence $(\S\times I,\partial)\to (\S\times I,\partial)$
which is the identity on $\S\times{0}$.
So again we get a surjective map $S^{S^1}_0(\S\times I,\partial)\to \mathcal D^{S^1}_0(\S)$.
The abelian obstruction groups $L^{S^1}_{j}(\S\times I,\partial)$
can be computed to be:
$L^{S^1}_{2n+1}(\S\times I,\partial)\cong \Z\oplus \Z_2$,
$L^{S^1}_{2n}(\S\times I,\partial)\cong 0$.
One uses an exact sequence from \cite{BQ} which relates
them to the $L$-groups of the strata, which are isomorphic to the usual
non-equivariant $L$-groups of the quotients. These quotients $\P_0$ and $\P \setminus \P_0$ are simply-connected (see the beginning of Section \ref{51assumptions}).
In particular there is the short exact sequence
$$
\xymatrix@C-10pt{
0
\ar[r]
&
L^{S^1}_{2n+1}\left((\S\setminus \S_0)\times I,\partial\right)
\ar[d]^\cong
\ar[r]
&
L^{S^1}_{2n+1}(\S\times I,\partial)
\ar[r]
&
L^{S^1}_{2n-1}(\S_0\times I,\partial)
\ar[d]^\cong
\ar[r]
&
0
\\
&
L_{2n}(1)\cong\Z_2
& &
L_{2n-2}(1)\cong \Z.
}
$$
In the above, we used the fact that $n\geq 5$ is odd
for our case of non-semifree actions. 

The isovariant surgery exact sequence is the exact sequence of groups in the first row of the diagram
$$
\xymatrix@C-10pt{
[\Sigma^2 \P, G/O]
\ar[r]^<<<<\sigma
\ar[d]^{i^*}
&
L^{S^1}_{2n+1}(\S\times I,\partial)
\ar[r]
\ar[d]^{i^*}
&
S^{S^1}_0(\S\times I,\partial)
\ar[r]
\ar[d]
&
[\Sigma \P, G/O]
\ar[r]
&
L^{S^1}_{2n}(\S\times I,\partial)
\\
[\Sigma^2 \P_0, G/O]
\ar[r]^<<<\sigma
&
L^{S^1}_{2n-1}(\S_0\times I,\partial)
&
\mathcal D^{S^1}_0(\S).
}
$$
Finiteness of $S^{S^1}_0(\S\times I,\partial)$, and thus of $\mathcal D^{S^1}_0(\S)$ now follows from the following
lemma.
\end{proof}

\begin{lemma}\label{L-action}
The image of $L^{S^1}_{2n+1}(\S\times I,\partial)$ in $S^{S^1}_0(\S\times I,\partial)$ is finite.
\end{lemma}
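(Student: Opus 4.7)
The plan is to use the commutative diagram from the proof of Proposition~\ref{ds1} to push the statement onto the bottom stratum, where the analysis parallels (and in fact directly uses) Proposition~\ref{D0}. By exactness of the isovariant surgery sequence, the image of $L^{S^1}_{2n+1}(\S\times I,\partial)$ in $S^{S^1}_0(\S\times I,\partial)$ is the cokernel of $\sigma\colon [\Sigma^2\P, G/O] \to L^{S^1}_{2n+1}(\S\times I,\partial)\cong \Z\oplus\Z_2$, so it suffices to show that $\sigma\otimes\Q$ is surjective. From the short exact sequence in the proof of Proposition~\ref{ds1}, the vertical restriction $i^*\colon L^{S^1}_{2n+1}(\S\times I,\partial) \to L^{S^1}_{2n-1}(\S_0\times I,\partial)\cong\Z$ is the projection $\Z\oplus\Z_2\to\Z$, hence a rational isomorphism. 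Commutativity of the diagram therefore reduces the goal to showing that the composite
$$[\Sigma^2\P, G/O] \xrightarrow{i^*_{G/O}} [\Sigma^2\P_0, G/O] \xrightarrow{\sigma_{\S_0}} L^{S^1}_{2n-1}(\S_0\times I,\partial)\cong\Z$$
has image of finite index.

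First I would handle $\sigma_{\S_0}$. Since $\Z_2$ is the common stabilizer on $\S_0$, the group $S^1/\Z_2$ acts freely on it, and the quotient $\P_0 = \S_0/S^1$ is a smooth manifold---by Theorem~\ref{LT} (using the weight pattern $(2,\dots,2,1)$) a $\CP^{j-1}$-bundle over the homotopy projective space $X$, simply connected with purely even cohomology. As $n$ is odd and $\dim(M) = 2n > 6$, we have $n\geq 5$ and $\dim(\P_0) = 2n-4 \geq 6 > 4$, so the proof of Proposition~\ref{D0} applies verbatim to $\P_0$ and yields that $S(\P_0\times I,\partial)$ is finite. Consequently, in the ordinary surgery exact sequence for $\P_0$, the cokernel of $\sigma_{\S_0}\colon [\Sigma^2\P_0, G/O]\to L_{2n-2}(1)\cong\Z$ injects into this finite set; so $\sigma_{\S_0}$ is rationally surjective.

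Next I would handle $i^*_{G/O}$. Since $G/O$ is an infinite loop space with $\pi_{4k}(G/O)\otimes\Q\cong\Q$ and all other rational homotopy groups zero, the Atiyah--Hirzebruch spectral sequence gives a natural rational identification $[\Sigma^2 Z, G/O]\otimes\Q \cong \bigoplus_{k\geq 1}H^{4k-2}(Z;\Q)$ for any $Z$ with purely even rational cohomology, and identifies $i^*_{G/O}\otimes\Q$ with the cohomology restriction $H^*(\P;\Q)\to H^*(\P_0;\Q)$. To verify that the latter is surjective, I would apply Leray--Hirsch to both bundles over $X$: Lemma~\ref{e(P)} produces a class $2e\in H^2(\P;\Z)$ restricting to a fiberwise generator of $H^*(\CP^j)$, and its further restriction to $\P_0$ is a corresponding Leray--Hirsch class for $\P_0\to X$; as the fiberwise inclusion $\CP^{j-1}\hookrightarrow\CP^j$ is surjective on cohomology, so is the induced restriction on total spaces. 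Together with the previous step this shows $\sigma_{\S_0}\circ i^*_{G/O}$ is rationally surjective onto $\Z$, finishing the proof.

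The main obstacle I anticipate is confirming the naturality of the Atiyah--Hirzebruch identification of $[\Sigma^2(\,\cdot\,), G/O]\otimes\Q$ with rational cohomology under the inclusion $\P_0\hookrightarrow\P$. After that, the rest of the argument amounts to packaging the pieces already developed---the short exact sequence of $L$-groups from the proof of Proposition~\ref{ds1}, Lemma~\ref{e(P)} feeding Leray--Hirsch on the two projective bundles, and Proposition~\ref{D0} applied to the bottom stratum.
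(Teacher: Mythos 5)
Your proposal is correct and follows essentially the same route as the paper: reduce via the commutative diagram and the short exact sequence of $L$-groups to showing that $\sigma_{\S_0}\circ i^*_{G/O}$ is rationally surjective onto $L^{S^1}_{2n-1}(\S_0\times I,\partial)\cong\Z$, then establish rational surjectivity of $i^*_{G/O}$ by the Atiyah--Hirzebruch identification plus Leray--Hirsch on the sub-projective bundle, and of $\sigma_{\S_0}$ by the Novikov exotic-sphere argument underlying Proposition~\ref{D0}. The extra details you supply (dimension checks for $\P_0$ and the explicit Leray--Hirsch class from Lemma~\ref{e(P)}) are consistent with, and only elaborate on, the paper's argument.
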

\begin{proof}
We show that the image of $[\Sigma^2 \P, G/O]$ in $L^{S^1}_{2n+1}(\S\times I,\partial)$
is infinite. Since $L^{S^1}_{2n+1}(\S\times I,\partial)\to L^{S^1}_{2n-1}(\S_0\times I,\partial)$
has finite kernel, it suffices to show that the composition
$$[\Sigma^2 \P, G/O]\overset{i^*}\longrightarrow [\Sigma^2\P_0, G/O] \overset{\sigma}\longrightarrow L^{S^1}_{2n -1}(\S_0\times I,\partial)\cong \Z$$
has nontrivial image.
We can compute rationally:
\begin{eqnarray*}
[\Sigma^2 \P, G/O] \otimes \Q & \cong & \bigoplus_j H^j\left(\Sigma^2 \P;\pi_j G/O\otimes \Q\right),\\
\ [ \Sigma^2 \P_0, G/O ]
\otimes \Q & \cong & \bigoplus_j H^j\left(\Sigma^2 \P_0;\pi_j G/O\otimes \Q\right).
\end{eqnarray*}
The map $i^*$ corresponds to the map induced by inclusion
$$\bigoplus_j H^j\left(\Sigma^2 \P;\pi_j G/O\otimes \Q\right)
\to \bigoplus_j H^j\left(\Sigma^2 \P_0;\pi_j G/O\otimes \Q\right).$$
Since $\P_0$ is a sub-projective bundle of $\P$, we can apply the Leray--Hirsch theorem
to show that $i^*$ is rationally surjective.
The  map $\sigma$ is the map in the ordinary surgery exact sequence for $\P_0$, see (\ref{sequence1}).
We saw that this map must be rationally surjective (using Novikov's theorem).
Thus the lemma follows.
\end{proof}

Finally  Lemmas \ref{idonH} and \ref{homotopy finiteness} are replaced by the following result.
\begin{proposition}\label{transverse homotopy}
For $\dim(M) = 2n > 6$, $n$ odd,  the subgroup of equivariant diffeomorphisms $\psi\colon \S\to \S$ such that $\psi$ is transverse linearly isovariantly homotopic to the identity has finite index in the group of all equivariant self-diffeomorphisms of $\S$.
\end{proposition}
\begin{proof}
Let $\varphi\colon \S\to \S$ be any equivariant diffeomorphism.
We try to construct a transverse linear isovariant homotopy from $\varphi$ to $\mbox{id}$ in three steps.
We will see that in the first and third step, we need to restrict to those $\varphi$ which are in a certain subgroup
of finite index to construct the homotopy.

{\bf Step 1}: We construct a homotopy on $\S_0$.
There is a restriction homomorphism $r\colon \mbox{Diff}^{S^1}\!\! (\S) \to \mbox{Diff}^{S^1}\!\! (\S_0)$
from the group of equivariant self-diffeomorphisms of $\S$ to the group of equivariant self-diffeomorphisms of $\S_0$.
By Lemmas \ref{idonH} and \ref{homotopy finiteness},
the normal subgroup of $\mbox{Diff}^{S^1}\!\! (\S_0)$ consisting of those diffeomorphisms which are equivariantly homotopic to the identity has finite index.
It follows that its inverse image under $r$ has finite index in $\mbox{Diff}^{S^1}\!\! (\S)$.
For $\varphi$ contained in this subgroup, we let
$h\colon \S_0\times I \to \S_0$ be an equivariant homotopy between $\mbox{id}$ and $r(\varphi)$, which we may assume to be constant for $t\ge \frac12$,
i.e. $h(x,t)=\varphi(x)$ for $x\in \S_0$ and $t\ge \frac 12$.

{\bf Step 2}: We extend the homotopy on $\S_0$ to a transverse linear isovariant map from a tubular neighbourhood
of $\S_0\times I$ to $\S$. This can be done by Lemma~\ref{step2}.

{\bf Step 3}: We extend the map from the neighbourhood of $\S_0\times I$ to a transverse linear isovariant
homotopy $h'\colon \S\times I \to \S$ such that $h'(x, 0)= x$ and $h'(x, 1) = \varphi(x)$. By Lemma~\ref{step3}, there are finitely many obstructions with values in finite groups against doing this.
Again these obstructions depend on some choices (for example choices in steps 1 and 2), but an argument similar to the proof of
Lemma \ref{homotopy finiteness} shows that step 3 can be done for $\varphi$ contained in a subgroup of finite index.
\end{proof}

\begin{lemma}\label{step2}
Let $\varphi\colon \S\to \S$ be an equivariant diffeomorphism.
Let $h\colon \S_0\times I \to \S_0$ be an equivariant homotopy from the identity to the
restriction of $\varphi$ to $\S_0$ such that $h(x, t) = \varphi(x)$ for $x\in \S_0$ and $t\ge \frac12$.
Then we can extend $h$ to a transverse linear isovariant map from a tubular neighbourhood of
$\S_0\times I$ in $\S\times I$ to $\S$.
\end{lemma}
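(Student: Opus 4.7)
The plan is to reduce the extension problem to constructing an equivariant bundle map on the normal bundle of $\S_0$, and then to produce the latter by parallel transport with respect to an invariant connection. First, I would apply the equivariant tubular neighbourhood theorem (the slice theorem for the smooth $S^1\times \Z_2$-action on $\S$) to identify a sufficiently small open tubular neighbourhood $U$ of $\S_0$ in $\S$ with the total space of the normal bundle $\nu\to \S_0$ of $\S_0$ in $\S$. Here $\nu$ is a real rank-$2$ vector bundle on which $\Z_2$ acts fibrewise by $-1$ and which carries an $S^1$-action lifting the action on $\S_0$. A tubular neighbourhood of $\S_0\times I$ in $\S\times I$ is correspondingly identified with the pullback $\pi^*\nu$ over $\S_0\times I$, where $\pi\colon \S_0\times I\to\S_0$ is the projection.

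Under this identification, a smooth equivariant map defined on the tube is transverse linear isovariant precisely when, in the chosen bundle coordinates, it is given by an $(S^1\times \Z_2)$-equivariant vector bundle map between normal bundles covering a map on the bases. So the task reduces to producing an $(S^1\times \Z_2)$-equivariant bundle map $\tilde h\colon \pi^*\nu\to \nu$ covering $h$; a final rescaling in the fibre direction then lands the image inside $U\subset\S$, giving the desired extension of $h$ on an open tubular neighbourhood of $\S_0\times I$.

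To construct $\tilde h$, I would fix an $S^1$-invariant Riemannian fibre metric and a compatible connection $\nabla$ on $\nu$, obtainable by averaging any metric and compatible connection over the compact group $S^1$. For each $(x,t)\in \S_0\times I$, parallel transport with respect to $\nabla$ along the path $s\mapsto h(x,s)$, $s\in[0,t]$, defines a linear isomorphism $P_t^x\colon \nu_x\to \nu_{h(x,t)}$. This isomorphism is $S^1$-equivariant by invariance of $\nabla$, and automatically $\Z_2$-equivariant since $\Z_2$ acts by $\pm 1$ on the fibres. Setting $\tilde h(v,t)=P_t^x(v)$ for $v\in \nu_x$ produces the required equivariant bundle map. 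At $t=0$ it is the identity on $\nu$, and for $t\ge \tfrac12$ it is independent of $t$ because $h$ is.

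The main obstacle, as I see it, is the first step: the equivariant tubular neighbourhood identification, which is where genuine structural input about the $S^1\times \Z_2$-action near $\S_0$ enters (via the slice theorem for compact Lie group actions and the fact that $\S_0$ is a codimension-$2$ submanifold fixed by $\Z_2$). Once this identification is in hand, the reduction of transverse linear isovariance to an equivariant bundle map condition, and the construction of the bundle map via parallel transport, are routine.
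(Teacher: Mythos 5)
Your overall framework---identify the tube with the normal bundle $\nu$ of $\S_0$, observe that transverse linear isovariance amounts to giving an $(S^1\times\Z_2)$-equivariant bundle map $\pi^*\nu\to\nu$ covering $h$, and produce such a map---is the same as the paper's (the paper gets the bundle map from the isomorphism $h^*\nu\cong pr_1^*\nu=\nu\times I$, you get it from parallel transport; either works and both give the identity at $t=0$). But there is a genuine gap at the other endpoint. The lemma is used in Step~2 of Proposition~\ref{transverse homotopy}, and in Lemma~\ref{step3} the resulting map on the tube must be glued with the identity on $\S\times\{0\}$ and with $\varphi$ on $\S\times\{1\}$ to form a single map $F$ on $\S\times\partial I\cup N(\S_0\times I)$. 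So the extension is required to restrict, on the whole tubular neighbourhood at $t=1$, to $\varphi$ itself, i.e.\ in bundle coordinates to the normal derivative $d\varphi|_{\nu}$. Your parallel transport $P^x_1\colon\nu_x\to\nu_{\varphi(x)}$ covers $\varphi|_{\S_0}$ but has no reason to equal $d\varphi|_{\nu_x}$; the two differ by an equivariant orthogonal gauge transformation of $\nu$, i.e.\ an element of $\mbox{Maps}(\S_0/S^1,SO(2))$. This is exactly why the hypothesis that $h$ is constant for $t\ge\frac12$ is there: it leaves a collar on which to kill the discrepancy. The paper uses that $\S_0/S^1$ is simply connected, hence the gauge transformation is homotopic to the identity, and inserts that homotopy on the constant part of $h$ so that the final map really restricts to $\varphi$ at the endpoint. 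Your construction, which is genuinely $t$-independent for $t\ge\frac12$, omits this correction, and the subsequent gluing in Lemma~\ref{step3} would fail.

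A secondary, related point: $d\varphi$ need not be orthogonal for your single fixed invariant metric $g$, so even after the gauge correction you should not expect to land in the $g$-tube of the correct radius at $t=1$. The paper handles this by interpolating metrics $g_t=(1-t)g+t\varphi^*g$ slice by slice, so that at the endpoint one compares $\varphi$ with an isometry from the $\varphi^*g$-tube to the $g$-tube. Incorporating both the metric interpolation and the $SO(2)$-gauge homotopy (using $\pi_1(\S_0/S^1)=1$) would complete your argument.
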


\begin{proof}
In order to make the notation less complicated, we replace $I$ by $[0,2]=I\cup[1,2]$ and assume that the homotopy $h\colon \S_0\times [0,2] \to \S_0$ is constant
for $t\ge 1$.

The codimension 2 invariant submanifolds $\S_0\subset \S$ and $\S_0\times I \subset \S\times I$ have equivariant normal bundles with total spaces
$\nu$ and $\nu\times I$. By definition the fiber of $\nu$ at $x\in \S_0$ is $T_x\S/T_x\S_0$.
Let $g$ be an $S^1$-invariant metric on $\S$.
Then $g_t=(1-t)g+t\varphi^*g$ is an $S^1$-invariant metric on $\S\times \{t\}$.
The metric $g$ identifies the fiber of $\nu$ at $x$ with the orthogonal complement (w.r.t $g$) of $T_x\S_0$ in $T_x\S$,
thus defines a scalar product on the fibers of $\nu$ (by restriction of $g$ to this orthogonal complement). We can thus use
the disk bundle $D_\epsilon^{g}(\nu)$ of vectors of norm at most $\epsilon$. We do similarly for $\nu\times I$,
where in the $t$-slice we use the metric $g_t$.

Let $\epsilon$ be small such that the exponential map of $g_t$ in the normal direction in each slice
gives tubular neighbourhoods $N^{g}_\epsilon(\S_0)$ and $N^{g_t}_\epsilon(\S_0\times I)$
(the union of the $\epsilon$-neighbourhoods w.r.t $g_t$ in the $t$-slices) and identifications
$\Psi\colon  N^{g}_\epsilon(\S_0) \cong D^g_\epsilon(\nu)$ and
$\psi\colon  N^{g_t}_\epsilon(\S_0\times I) \cong D^{g_t}_\epsilon(\nu \times I).$

Now if we pull back $\nu$ via $h\colon \S_0\times I\to \S_0$, the
pull-back is isomorphic as equivariant vector bundle to $\nu\times
I$. The reason is that $h$ is equivariantly homotopic to $\mbox{pr}_1\colon
\S_0\times I\to \S_0$, so that $h^*\nu\cong \mbox{pr}_1^*\nu=\nu\times I$.
Thus we get a bundle map $\nu\times I\to \nu$ covering $h$, which we
can assume to be equivariant and orthogonal on each fiber, so
it restricts to a map $D^{g_t}_\epsilon(\nu \times I)\to
D^g_\epsilon(\nu)$, which again restricts to an orthogonal map on
each fiber. Composing with $\psi$ and $\Psi^{-1}$, we get a map
$H\colon N^{g_t}_\epsilon(\S_0\times I)\to N^{g}_\epsilon(\S_0)$
such that $H(x,0)=x$ and $\Psi \big(H(x,1)\big)$ differs from
$\Psi\big(\varphi(x)\big)$ by an equivariant orthogonal bundle automorphism
of $\nu$.

This is an orientable 2-dimensional Euclidean vector bundle, so the
group of equivariant bundle automorphisms is isomorphic to
$\mbox{Maps}\big(\S_0/S^1,SO(2)\big)$. But since $\S_0/S^1$ is
simply-connected, there is a homotopy to the constant map. This
produces a homotopy $N^{\varphi^*(g)}_\epsilon(\S_0)\times [1,2] \to
N^g_\epsilon(\S_0)$ from the restriction of $H$ to $\varphi$. The
union of this map and $H$ is a transverse linear isovariant map $f$
from a tubular neighbourhood of $\S_0\times [0,2]$ to a tubular
neighbourhood of $\S_0$. This maps boundaries to boundaries and
restricts to the identity in the slice $t=0$, and to $\varphi$ in
the slice $t=2$.
\end{proof}

\begin{lemma}\label{step3}
Let $\dim(M)=2n > 6$, where $n$ is odd. Let $N(\S_0\times I)$ be a tubular neighbourhood of $\S_0\times I$ in $\S\times I$,
and $F\colon \S\times \partial I \cup N(\S_0\times I)\to \S$ be a transverse linear isovariant map.
Then there are finitely many obstructions with values in finite groups for extending $F$
to a transverse linear isovariant map $\S\times I \to \S$.
\end{lemma}
\begin{proof}
To ensure isovariance, it suffices to extend the restriction
$$f\colon \big(\S\times \partial I \cup N(\S_0 \times I)\big) \setminus (\S_0 \times I)  \to \S\setminus \S_0$$
of $F$ to an equivariant map $(\S\setminus \S_0)\times I \to \S\setminus \S_0$.
On the quotients, we need to extend a map $$\big(\P\times \partial I \cup  N(\P_0 \times I)\big) \setminus (\P_0 \times I)  \to \P\setminus \P_0$$
to  a map $(\P\setminus \P_0)\times I\to \P\setminus\P_0$.
By (\ref{obstr}), the obstructions for the extension  lie in
\begin{eqnarray*}
& &H^{j+1}\Big((\P\setminus \P_0)\times I,\big(\P\times \partial I \cup  N(\P_0 \times I)\big) \setminus (\P_0 \times I);\pi_j(\P\setminus \P_0)\Big)\\
& \cong & H^{j+1}\big(\P\times I,\P\times \partial I \cup  N(\P_0 \times I);\pi_j(\P\setminus \P_0) \big) \quad \text{by excision}\\
& \cong & H^{j+1}\left(\P\times I,\P\times \partial I \cup  \P_0 \times I;\pi_j(\CP^i) \right)\quad \text{by homotopy invariance}\\
& \cong & H^j\left(\P,\P_0;\pi_j(\CP^i)\right) \quad \text{by the K\"unneth theorem.}
\end{eqnarray*}
For $j=2$ or for odd $j$  or for even $j>\dim(\P)$, the obstruction groups
vanish since the pair $(\P,\P_0)$ has vanishing cohomology in these
dimensions (recall $\dim(\P_0) > \dim (X) =2i \geq 4$ when $\dim(M) =2n > 6$, $n$ odd).
For $2<j\le \dim(\P)$ even, the obstruction groups are finite since $\pi_j(\CP^i)$ is finite.

It remains to show that we can lift an extension $K\colon (\P\setminus \P_0)\times I\to \P\setminus \P_0$ on the quotients to an equivariant extension of $f$.

There exists a lift of the map $K\colon (\P\setminus \P_0)\times I\to \P\setminus \P_0$ to some equivariant map
$\Tilde{K}\colon  (\S\setminus \S_0)\times I \to \S\setminus \S_0$,
since $K$ preserves the Euler class of the circle bundles.
This can be seen by restricting to $(\P\setminus \P_0)\times \{ 0 \}$, where such a lift --- namely the restriction of $f$ --- exists.

The restriction of $\Tilde{K}$ to
$\big(\S\times \partial I \cup N(\S_0 \times I)\big) \setminus (\S_0 \times I)$
differs from $f$ by a map
$$
g\in \mbox{Maps}\Big(\big(\P\times \partial I \cup N(\P_0 \times I)\big)
\setminus (\P_0 \times I), S^1\Big),$$ i.e.,
$f(x,t)=\Tilde{K}(x,t)\cdot g\left(\pi(x,t)\right)$.

Since $\big(\P\times \partial I \cup  N(\P_0 \times I)\big) \setminus (\P_0 \times I)$
is simply-connected
(this can be proved by the Seifert--van Kampen theorem), the map $g$ factors
through the universal covering $\R\to S^1$.
By the Tietze extension theorem, $g$ extends to a map
$G\colon (\P\setminus \P_0) \times I\to \R\to S^1$.
Now $\bar{K}\colon (\S\setminus \S_0)\times I \to \S\setminus \S_0$, defined by
$\bar{K}(x,t)=\tilde{K}(x,t)\cdot G\left(\pi(x,t)\right)$ is another equivariant lift of
$K$, and it is the desired extension of $F$ to an equivariant map
$(\S\setminus \S_0)\times I \to \S\setminus \S_0$.
\end{proof}

\begin{proof}[Proof of Proposition~\ref{pseudoisotopy finiteness}] (for the case when $\dim(M)=2n > 6$ and the action is not semifree.)
Let $\mathcal D^{S^1}(\S)$ be the group of
equivariant pseudo-isotopy classes of equivariant diffeomorphisms of $\S$.
Let $\mathcal H^{S^1}(\S)$ be the group of transverse linear
isovariant homotopy classes of transverse linear isovariant self-homotopy equivalences of $\S$.
Again there is a natural map $\varphi\colon  \mathcal D^{S^1}(\S)\to \mathcal H^{S^1}(\S)$.
By Proposition~\ref{L-action} we have that $\ker(\varphi)=\mathcal D_0^{S^1}(\S)$ is finite,
and Proposition~\ref{transverse homotopy} shows that
$\mathcal H^{S^1}(\S)$ is finite. Thus $\mathcal D^{S^1}(\S)$ is finite.
\end{proof}

\subsection{The case when $\dim(M)=2n=6$}
\
\medskip

In this section, we prove Proposition~\ref{pseudoisotopy finiteness} for the case when $\dim(M)=2n=6$.
In this case the preceding surgery arguments do not work. However a pseudo-isotopy classification of diffeomorphisms of 4-manifolds treats diffeomorphisms of 5-dimensional manifolds (the 4-manifold times the interval), and more refined surgery arguments do work in dimension 5.
Before we prove the proposition in this case, let us make the following observation.

\begin{lemma}\label{n3q}
In the case of a non-semifree action and $dim(M) = 2n = 6$, the pair $(\P,\P_0)$ is homeomorphic to the pair consisting of $S^2\times S^2$ and its diagonal,
and the top stratum $\P\setminus \P_0$ is diffeomorphic to the total space $TS^2$ of the tangent bundle of the 2-sphere.
\end{lemma}
\begin{proof} In this case, the fixed component $X$ is diffeomorphic to $\CP^1$. By \cite[Theorem 2]{LT},
the equivariant normal bundle of $X$ in $M$ is the direct sum of two complex line bundles: $N_1\oplus N_2$, where $S^1$ acts on the fiber of $N_1$ with weight $1$ and acts on the fiber of $N_2$ with weight $2$, and $c_1(N_1)=u$ and $c_1(N_2)=0$ with $u\in H^2(X; \Z)$ being a generator.
As in Lemma~\ref{221}, there is an equivariant map $$N_1\oplus N_2\to N_1^{\otimes 2}\oplus N_2,\quad (v_1,v_2)\mapsto (v_1\otimes v_1,v_2)$$ between the total spaces of the two bundles, where $S^1$ acts on the fiber of $N_1^{\otimes 2}\oplus N_2$ with weights $(2, 2)$. It induces a homeomorphism between the pairs $(\P,\P_0)$ and $\left(\P\left(N_1^{\otimes 2}\oplus N_2\right), \P(N_2)\right)$.

Now the total space of the canonical bundle $N_1$ is the subspace $$\big\{\big((x_0,x_1),[y_0:y_1]\big)\in \C^2 \times\CP^1 \mid x_0y_1=x_1y_0 \big\},$$
similarly the total space of $N_1^{\otimes 2}$ is $$\big\{\big((x_0,x_1),[y_0:y_1]\big)\in \C^2 \times\CP^1 \mid x_0y_1^2=x_1y_0^2\big\},$$ and finally
$\P\left(N_1^{\otimes 2}\oplus N_2\right)$ is homeomorphic to $$\Sigma_2=\big\{\big([x_0:x_1:x_2],[y_0:y_1]\big)\in \CP^2 \times\CP^1 \mid x_0y_1^2=x_1y_0^2\big\}.$$
The subspace $\P(N_2)$ consists of the points where $x_0=x_1=0$.
Here we use the same notation as Hirzebruch in \cite{Hi}, who writes down the homeomorphism
\begin{align*}
\CP^1\times\CP^1&\to  \Sigma_2\\
\big([u_0:u_1],[v_0,v_1]\big)&\mapsto \left(\left[\frac{v_1^2(-\overline{v_1}u_0+\overline{v_0}u_1)}{|v_0|^2+ |v_1|^2} :\frac{v_0^2(-\overline{v_1}u_0+\overline{v_0}u_1)}{|v_0|^2+ |v_1|^2}:v_0u_0+v_1u_1  \right],[v_0:v_1]\right)
\end{align*}
Finally we compose with a self-map of $\CP^1\times\CP^1$ sending the diagonal to
the points where $-\overline{v_1}u_0+\overline{v_0}u_1=0$, namely $\big([u_0:u_1],[v_0:v_1]\big) \mapsto \big([u_0:u_1],[\overline{v_0}:\overline{v_1}]\big)$.

Note that the homeomorphism we constructed is smooth when restricted to each stratum, in particular it defines a diffeomorphism between
$\P\setminus\P_0$ and $S^2\times S^2\setminus \Delta$, where $\Delta$ denotes the diagonal. The latter space is diffeomorphic to the total space of $TS^2$ by stereographic projection:
for each $x \in S^2$, stereographic projection gives a bijection
between $S^2 \setminus \{ x \} $ and the tangent space to $S^2$ at $x$. Composing with this gives the required diffeomorphism.
\end{proof}

\medskip
\begin{proof}[Proof of Proposition~\ref{pseudoisotopy finiteness}] (for the case when $\dim(M)=2n=6$.)
In the case when the action is semifree, $\P$ is a smooth simply connected closed $4$-manifold. By a theorem of Kreck \cite[Theorem 1]{K2}, all self-diffeomorphisms of $\P$ inducing the identity on cohomology are pseudo-isotopic to the identity. Together with Lemma~\ref{idonH}, this implies that the group of pseudo-isotopy classes of self-diffeomorphisms of $\P$ preserving the Euler class of the circle bundle $\S\to\P$ is finite. Hence so is the group of pseudo-isotopy classes of equivariant self-diffeomorphisms of $\S$.

Now we consider the case when the action is not semifree. We prove the proposition in the following steps.

{\bf Step 0}: We restrict to a subgroup of finite index.

We start with an equivariant self-diffeomorphism $\Phi\colon\S\to\S$.
The quotient map $\overline{\Phi}\colon\P\to\P$ induces self-diffeomorphisms of the strata $\P_0$ and $\P\setminus\P_0$.
The strata $\P_0$ and $\P\setminus\P_0$ are both homotopy equivalent to $S^2$, so self-diffeomorphisms of the strata act as $\pm \mbox{id}$ on $H_2(\P_0)$ and $H_2\big(\P\setminus\P_0\big)$. If both signs are positive,
then $\overline{\Phi}$ induces the identity on the homology of $\P$, since $(\P,\P_0,\P\setminus\P_0)$ is homotopy equivalent to $(S^2\times S^2,\text{diagonal, antidiagonal)}$ by Lemma~\ref{n3q}.
So the subgroup of equivariant self-diffeomorphisms of $\S$ such that the quotient map $\overline{\Phi}\colon\P\to\P$ induces the identity on homology has index at most 4 in the group of all
equivariant self-diffeomorphisms of $\S$.

In the following we show that an equivariant self-diffeomorphism $\Phi\colon\S\to\S$ in this subgroup is pseudo-isotopic to the identity.

{\bf Step 1}:
We show that the restriction of $\Phi$ to $\S_0$ is equivariantly isotopic to the identity.

Let $\varphi\colon\S_0\to\S_0$ be the restriction of $\Phi$,
inducing a self-diffeomorphism $\overline{\varphi}\colon\P_0\to\P_0$ on the quotient.

Since $\P_0$ is diffeomorphic to $S^2$, by a theorem of Munkres \cite{M}, every orientation-preserving self-diffeomorphism $\overline{\varphi}$ is isotopic to the identity.
Let $\overline{\varphi}_t\colon I\times\P_0\to\P_0$ be such an isotopy, i.e. $\overline{\varphi}_0=\mbox{id}$ and $\overline{\varphi}_1=\overline{\varphi}$.

The pull-back of the $S^1$-bundle $\S_0\to\P_0$ by $\overline{\varphi}_t$
is isomorphic to $I\times\S_0$, so we obtain an $S^1$-equivariant map  $\varphi_t\colon I\times\S_0\to\S_0$
covering  $\overline{\varphi}_t$, such that $\varphi_0(x)=x$
for all $x$.  Since $\P_0$ is simply-connected, we may also assume that  $\varphi_1(x)=\varphi(x)$
for all $x$; so we obtain an equivariant isotopy $\varphi_t\colon I\times\S_0\to\S_0$ from the identity to the restriction $\varphi$ of $\Phi$.

{\bf Step 2}:
        We show that $\Phi$ is equivariantly isotopic to a self-diffeomorphism $\Phi_0\colon\S\to\S$ whose restriction to a neighborhood $N(\S_0)$ of $\S_0$ is the identity.

 The equivariant isotopy $\varphi_t$ is generated by an equivariant vector field $V(t,x)$ on $\S_0$. An equivariant extension of this vector field
produces an ambient isotopy $\Phi_t\colon I\times \S\to \S$, such that $\Phi_1=\Phi$, and $\Phi_0$ is the identity on $\S_0$.
Moreover, by uniqueness of equivariant neighbourhoods up to isotopy (see Theorem VI.2.6 in \cite{B}), we may assume that the restriction of $\Phi_0$ to a small equivariant tubular neighbourhood of $\S_0$ is an equivariant vector bundle automorphism. The group of equivariant normal bundle automorphisms of $\S_0$ is isomorphic to $\mbox{Maps}\big(\P_0,SO(2)\big)$, and since $\P_0$ is simply-connected, each such automorphism is homotopic to the trivial automorphism. It follows that we may assume that $\Phi_0$ is the identity on an equivariant tubular neighbourhood $N(\S_0)$ of $\S_0$.

{\bf Step 3}:
We show that every self-diffeomorphism $\Phi_0$ of $\S$ which restricts to the identity on $N(\S_0)$ is equivariantly pseudo-isotopic to the identity of $\S$. It suffices to construct an equivariant  pseudo-isotopy on $\S\setminus N(\S_0)$ relative to its boundary.

On $\S\setminus N(\S_0)$, the circle acts freely, so we can work with quotient spaces.
We can assume that the equivariant diffeomorphism $\Phi_0\colon \S\setminus N(\S_0)\to \S\setminus N(\S_0)$ is the identity on a neighbourhood of the boundary, so the induced quotient diffeomorphism
\begin{equation}\label{p-p0}
\psi=\overline{\Phi}_0\colon\P\setminus N(\P_0)\to \P\setminus N(\P_0)
\end{equation}
where $N(\P_0) = N(\S_0)/S^1$, is the identity on a neighborhood of the boundary.
Moreover,  $\psi$ induces identity on homology. Lemma \ref{4dboundary} below shows that the proof of \cite[Theorem 1]{K2} for simply-connected closed 4-manifolds can be slightly modified to show that $\psi$ is pseudo-isotopic to the identity.

Again, the pair $\left(I\times \big(\P\setminus N(\P_0)\big),\partial\right)$  is 1-connected, so the isotopy from $\psi=\overline{\Phi}_0$ to the identity lifts to an equivariant isotopy
on $\S\setminus N(\S_0)$ between $\Phi_0$ and the identity, constant on the boundary. This finishes the proof of the proposition.
\end{proof}

\begin{lemma}\label{4dboundary}
The diffeomorphism $\psi$ in (\ref{p-p0}) is pseudo-isotopic (relative boundary) to the identity.
\end{lemma}

\begin{proof}
By Lemma \ref{n3q}, the pair $\big(\P \setminus N(\P_0),\partial\big)$ is diffeomorphic to $\big(D(TS^2),S(TS^2)\big)$, the pair  of  (disk bundle, sphere bundle) of the tangent bundle of $S^2$. Let $Q=D(TS^2)$, which is a simply connected spin 4-manifold  with connected boundary. We identify $\psi$ in (\ref{p-p0}) with
 $$\psi\colon Q\to Q,$$
 which is a diffeomorphism restricting to the identity on the boundary, and inducing the identity on homology.

We consider $R=Q\times I$, this has boundary $\partial R=Q\times \{ 0 \} \cup \partial Q \times I \cup Q \times \{ 1\}$.
Given the self-diffeomorphism $f=\psi \cup \mbox{id} \cup \mbox{id}$ of $\partial R$, we ask whether it extends to a self-diffeomorphism of $R$.
If it does, then this self-diffeomorphism is a pseudo-isotopy from $\psi$ to $\mbox{id}_Q$ relative to the boundary.

We use $f$ to produce the twisted double $R\cup_f -R$. This is homeomorphic to the union $T_\psi \cup \partial Q\times D^2$,
where $T_\psi=Q\times I / (0,x)\sim (1,\psi(x))$ is the mapping torus of $\psi$.
Now a Mayer-Vietoris sequence shows that $H^*(T_\psi)=H^*(Q\times S^1)$, and another Mayer-Vietoris sequence shows that  $R\cup_f -R$ is spin if
$T_\psi$ is (the second Stiefel-Whitney class $w_2$ of the former restricts to $w_2$ of the latter).
Now $T_\psi$ is spin since $Q$ is, here one can use the Wang sequence as in \cite{K2}.
Now the proof proceeds as in \cite{K2}: one has a spin nullbordism $W$ of $R\cup_f -R$, which one can turn into a relative $h$-cobordism
by surgery.

Note that $H_3(R\cup_f -R)\cong H_2(Q)$, as is seen by the Mayer-Vietoris sequence
for $R\cup_f -R=T_\psi \cup \partial Q\times D^2$, also we have $H_3(Q)=H^1(Q,\partial Q)=0$. These are the two assumptions for the proof of Theorem 1 in \cite{K2} which hold trivially for simply-connected closed 4-manifolds.
\end{proof}

\section{Discussion and a question}\label{discussion}

 As we have discussed, under Assumption \ref{conditions}, when $X$ or $Y$ is an isolated point, by Delzant's theorem \cite{D}, $M$ is equivariantly symplectomorphic to $\CP^n$ with a standard circle
action as in (\ref{standard})  with $j=0$. When $\dim(M) = 6$ and when the action is semifree,
by \cite{D} and by a result of Gonzalez \cite{G}, $M$ is  equivariantly symplectomorphic to
$\CP^3$ with a standard circle action.

 In the case Delzant studied, the regular symplectic quotients are smooth $\CP^{n-1}$ with
 a standard symplectic structure. In the case  when $\dim(M) = 6$ and  the action is semifree,
  the regular  symplectic  quotients are smooth $4$-manifolds. Symplectic topological methods for
 symplectic $4$-manifolds give certain nice results on
 the symplectic structure, so that  the rigidity condition
 of the regular symplectic quotients
proposed by Gonzalez \cite{G} is fulfilled. The rigidity property of
the regular symplectic quotient grants a unique gluing up to
symplectic isotopy of the symplectic tubular neighborhoods of $X$
and of $Y$ along a regular level set of the moment map. When the
dimensions of the symplectic quotients  are  bigger, it is hard to
determine if the quotients are rigid.

For the $6$-dimensional Hamiltonian $S^1$-manifolds McDuff studied
in \cite{Mc} (see the Introduction), she uses the rigidity criterion
and  resolution of singularities of the $4$-dimensional
symplectic quotients to prove uniqueness up to equivariant
symplectomorphism of the Hamiltonian $S^1$-manifolds. In her case,
the symplectic quotients are symplectic $4$-orbifolds which have
isolated singularities.  In our case,  when $\dim(M) = 6$ and when the
action is not semifree, the symplectic quotient is
 $\P_{2,1}(\C\oplus H)$, the weighted
 projective bundle of the direct sum of a trivial and the Hopf bundle over $S^2$.
 We saw in Lemma \ref{n3q} that it is homeomorphic to
 $S^2\times S^2$ with the diagonal as the singular set.
 The method in \cite{Mc} does not generalize directly to this case.

We do not know examples other than $\CP^n$ and $\Gt_2(\R^{n+2})$ satisfying Assumption \ref{conditions}.
The following question is open.
\begin{question}
If Assumption \ref{conditions} holds, is $M$ $S^1$-equivariantly diffeomorphic
or symplectomorphic to $\CP^n$ or to $\Gt_2(\R^{n+2})$?
\end{question}

\begin{appendix}

\section{Non-equivariant results}\label{NonEqClassification}

In this appendix, we prove the non-equivariant uniqueness results mentioned in Remark~\ref{lowdim}. Due to the close relation of the arguments, we also prove finiteness of the manifold up to non-equivariant diffeomorphism. More precisely, we prove the following theorem.

\begin{theoremapp}\label{diffeo}
If Assumption \ref{conditions} holds, then
\begin{enumerate}
\item  up to diffeomorphism  there are finitely many such manifolds in each fixed
dimension;
\item  when $\dim M = 2n \leq 6$, $M$ is
diffeomorphic to $\CP^n$ if the action is semifree, and is
diffeomorphic to $\Gt_2(\R^{5})$ if the action is not semifree;
\item when $\dim M=10$ or $\dim M=14$ and the action is not semifree,
 $M$ is respectively homeomorphic to $\Gt_2(\R^{7})$ or to $\Gt_2(\R^{9})$.
\end{enumerate}
\end{theoremapp}

For the case of low dimensions and semifree actions, as mentioned in the Introduction, the results follow
from the literature. For the rest of the proof, we will use the one-connectivity of the manifold given by Theorem~\ref{A} and the integral cohomology ring and Chern classes of the manifold given by (\ref{ma}) and
(\ref{mb}) in Theorem~\ref{LT}. The more direct tool we use in this section is Kreck's modified surgery theory.

\subsection{The case when the action is semifree}\label{noneqA}
\
\medskip

 Theorem~\ref{diffeo} (1) for the case when $\dim(M)\neq 4$ and when the action is semifree
follows from Proposition~\ref{finite-hcp} below and Theorem~\ref{hcp}
(2).  For the cases when $\dim(M)\leq 6$ and when the action is semifree,
we saw in the Introduction that $M$ is diffeomorphic to $\CP^n$, where $n\leq 3$.

\begin{proposition} (\cite{Su, Lit})\label{finite-hcp}
Let $X$ be a homotopy complex projective space with standard Pontryagin classes.
If $\dim(X)\neq 4$, there are finitely many diffeomorphism types of $X$.
\end{proposition}

\subsection{The case when the action is not semifree}
\
\medskip

In this case a classification up to homotopy equivalence, which is
needed for the classical surgery theory used by Sullivan and Little,
is not known. We use Kreck's modified surgery theory \cite{K}, which
does not need the homotopy type, but the normal $(n-1)$-type (see
Definition~\ref{n-1type}) of the manifolds. We determine the normal
$(n-1)$-type of the manifold in Lemma~\ref{normaltype}, and then prove the theorem
using Kreck's theory and a computation by F. Fang and J. Wang.\\

\begin{lemma}\label{hypersurface}
The manifold $\Tilde{G}_2(\R^{n+2})$ is diffeomorphic to a quadratic
hypersurface in $\CP^{n+1}$, i.e., the vanishing set of a
homogeneous polynomial of degree 2.
\end{lemma}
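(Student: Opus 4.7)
The plan is to construct an explicit diffeomorphism between $\Tilde G_2(\R^{n+2})$ and the smooth complex quadric
$$Q_n = \Big\{ [z_0 : \cdots : z_{n+1}] \in \CP^{n+1} \,\Big|\, z_0^2 + z_1^2 + \cdots + z_{n+1}^2 = 0 \Big\}.$$
Given an oriented $2$-plane $V \subset \R^{n+2}$, choose any oriented orthonormal basis $(e_1, e_2)$ of $V$ and send $V$ to $[e_1 + i e_2] \in \CP^{n+1}$. The proof then proceeds in three short steps.

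First I would check that this map is well-defined and lands in $Q_n$. Using the $\C$-bilinear extension of the Euclidean inner product, one computes $(e_1 + i e_2)\cdot(e_1 + i e_2) = |e_1|^2 - |e_2|^2 + 2 i (e_1 \cdot e_2) = 0$, so the image lies on $Q_n$. If $(e_1', e_2')$ is another oriented orthonormal basis of $V$, then it differs from $(e_1, e_2)$ by an element of $SO(2)$, and a direct calculation shows $e_1' + i e_2' = e^{-i\theta}(e_1 + i e_2)$ for some $\theta$, so the projective class is independent of the basis choice.

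Next I would show the map is a bijection onto $Q_n$. Given $[z] \in Q_n$, write $z = x + i y$ with $x, y \in \R^{n+2}$. The defining equation $z \cdot z = 0$ is equivalent to $|x|^2 = |y|^2$ and $x \cdot y = 0$, so $x$ and $y$ are orthogonal real vectors of equal nonzero length. After multiplying $z$ by a unique positive real scalar we may assume $|x| = |y| = 1$, and the remaining freedom in the projective class is multiplication by $e^{i\theta} \in S^1$, which, as noted above, is exactly the ambiguity in choosing an oriented orthonormal basis of the oriented $2$-plane $V = \operatorname{span}(x,y)$. This yields a two-sided inverse on the level of sets.

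Finally I would verify smoothness of both the map and its inverse: the map is smooth as the composition of a smooth section of the oriented orthonormal $2$-frame bundle with the smooth map $(e_1,e_2)\mapsto [e_1 + i e_2]$, and smoothness of the inverse is immediate from the local description above in terms of $x,y$ on the affine chart where some coordinate of $z$ is nonzero. Both sides are compact smooth manifolds of dimension $2n$, so a smooth bijection with smooth inverse is a diffeomorphism. No step here is a real obstacle; the only point requiring care is the bookkeeping of the $S^1$-ambiguity, which matches precisely the passage from an oriented $2$-plane to a unit oriented orthonormal frame.
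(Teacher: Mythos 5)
Your proposal is correct and is essentially the same as the paper's argument: the paper also defines the map by sending an oriented plane with oriented orthonormal basis $v,w$ to the complex line spanned by $v+iw$ and observes that the image is the quadric $\sum z_i^2=0$. You have simply filled in the routine verifications (well-definedness under the $SO(2)$-ambiguity, surjectivity, smoothness of the inverse) that the paper leaves to the reader.
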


\begin{proof}
Consider $\CP^{n+1}$ as the set of complex lines in the complexification of $\R^{n+2}$.
Define $\iota\colon\Tilde{G}_2(\R^{n+2})\to \CP^{n+1}$ such that it maps an oriented plane with oriented orthonormal basis $v,w$ to the complex line spanned by $v+iw$. It is easy to check that this map is well-defined, injective, smooth, and that the image is the vanishing set of the homogeneous quadratic polynomial $\sum z_i^2$.
\end{proof}

\begin{definition}\label{frakM_G}
Let $n\geq 3$ be odd. We name $\mathcal M_G$ to be any smooth,
compact, one-connected, almost complex $2n$-manifold with the same
integral cohomology ring and total Chern class as those of
$\Tilde{G}_2(\R^{n+2})$.
\end{definition}

Let $BO$ be the classifying space of the stable orthogonal group. It
is the union of the classifying spaces $BO_k$ of the orthogonal
groups, and each $BO_k$ is a union of Grassmann manifolds
$G_k(\R^{n+k})$ via the natural inclusions $G_k(\R^{n+k})\subseteq
G_k(\R^{n+k+1})$.

Let $N$ be a closed smooth $n$-manifold. An embedding $i\colon N \to \R^{n+k}$ gives rise to
a normal Gauss map $N\to G_k(\R^{n+k})$: to each $x\in N$ one assigns the normal space
of $i(N)$ at $i(x)$. In this way we also obtain a vector bundle over $N$, the normal
bundle of the embedding.

By composition we get the {\em{stable Gauss map}} $N\to BO$.
This map is (up to homotopy) independent of the choice of the embedding $i$. This is because the embeddings $i$ and $N\stackrel{i}{\to}\R^{n+k}\hookrightarrow \R^{n+k+1}$ have the same stable normal bundle map, two embeddings into $\R^{n+k}$ are isotopic for large $k$, and an isotopy of embeddings induces a homotopy of the stable Gauss maps.

The vector bundles over $N$ corresponding to isotopic embeddings into Euclidean space
are isomorphic, and the normal bundle of $N\stackrel{i}{\to}\R^{n+k}\hookrightarrow \R^{n+k+1}$ differs from the normal bundle of $i$ by the addition of a trivial rank 1 bundle. Thus every closed manifold $N$ has a unique stable normal bundle.

Recall that a stable (real or complex) vector bundle is an equivalence class of vector bundles, where we identify bundles which are isomorphic after adding trivial bundles to them.
For a compact Hausdorff space $X$, stable real (respectively complex) vector bundles over $X$ form a group $\Tilde{KO}^0(X)$ (respectively $\Tilde{K}^0(X)$) under direct sum, the reduced real (complex) $K$-theory of $X$. Stable real bundles over $X$ are classified
by homotopy classes of maps to $BO$. The direct sum of stable bundles induces a map
$\oplus\colon  BO\times BO\to BO$.

\begin{lemma}\label{chernclassstablevb}
Let $X$ be a compact Hausdorff space with the homotopy type of a finite $CW$-complex, and assume that $H^*(X; \Z)$ is torsion-free and concentrated in even degrees. Then two stable complex vector bundles over $X$ with the same total Chern class are isomorphic.
\end{lemma}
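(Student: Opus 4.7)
The plan is to prove this via the Chern character, exploiting our hypotheses to show that $K^0(X)$ is torsion-free, and then using that the total Chern class determines the Chern character rationally.

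First I would establish that $K^0(X)$ is torsion-free by examining the Atiyah--Hirzebruch spectral sequence
\[E_2^{p,q} = H^p(X; K^q(\mathrm{pt})) \Rightarrow K^{p+q}(X).\]
Since $K^q(\mathrm{pt})$ is $\Z$ for $q$ even and zero for $q$ odd, and by hypothesis $H^p(X;\Z)$ is zero for $p$ odd and torsion-free for $p$ even, the $E_2$-page is concentrated in bidegrees of even total degree and is torsion-free. Each differential $d_r$ raises total degree by $1$, so it lands in a zero group; hence the spectral sequence collapses at $E_2$. The associated graded of $K^0(X)$ with respect to the induced filtration is then the torsion-free group $\bigoplus_{p\text{ even}} H^p(X;\Z)$, so $K^0(X)$ is itself torsion-free.

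Next, by the classical theorem that the rational Chern character $\mathrm{ch}\otimes\Q\colon K^0(X)\otimes\Q \to H^{\mathrm{even}}(X;\Q)$ is an isomorphism for finite CW-complexes, combined with torsion-freeness of $K^0(X)$, the inclusion $K^0(X) \hookrightarrow K^0(X)\otimes\Q$ shows that $\mathrm{ch}\colon K^0(X) \to H^{\mathrm{even}}(X;\Q)$ is injective. Finally, I would observe that the total Chern class determines the Chern character: for a bundle with formal Chern roots $x_1,\dots,x_r$, we have $c = \prod_i(1+x_i)$ and $\mathrm{ch} = \sum_i e^{x_i} = \sum_n p_n/n!$ where $p_n = \sum_i x_i^n$, and Newton's identities express each $p_n$ as a polynomial with integer coefficients in $c_1,\dots,c_n$. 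Hence two stable bundles $E$ and $F$ with equal total Chern classes (and equal virtual rank, matched by stabilization) have equal Chern characters, and injectivity of $\mathrm{ch}$ forces $[E]=[F]$ in $K^0(X)$, i.e., $E$ and $F$ are isomorphic as stable complex vector bundles.

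The main obstacle is the collapse of the AHSS; the parity and torsion-freeness argument above handles it cleanly, and it crucially uses both hypotheses on $H^*(X;\Z)$ — torsion-freeness in even degrees together with vanishing in odd degrees.
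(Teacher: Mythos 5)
Your proof is correct and follows essentially the same route as the paper: collapse of the Atiyah--Hirzebruch spectral sequence using the parity and torsion-freeness hypotheses, freeness of $K^0(X)$, and injectivity of the Chern character into $H^{\mathrm{even}}(X;\Q)$, with Newton's identities supplying the (in the paper implicit) step that the total Chern class determines the Chern character. No substantive differences.
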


\begin{proof}
The Atiyah--Hirzebruch spectral sequence for reduced complex $K$-theory of $X$ degenerates since both the cohomology groups of $X$
and the coefficients of $K$-theory are zero in odd degrees. It follows that $\Tilde{K}^0(X)$ is a free abelian group. Thus the composition
$\Tilde{K}^0(X)\to \Tilde{K}^0(X)\otimes \Q \cong \bigoplus_i \Tilde{H}^{2i}(X;\Q)$ is injective. Here the isomorphism is given by the Chern character.
It follows that two stable vector bundles over $X$ with the same Chern classes are isomorphic.
\end{proof}

With an additional argument which covers the difference between stable and non-stable bundles, K. Wang proves the following result.
\begin{lemma} \cite [Proposition 3.1]{Wa1}\label{finite-bundle}
Let $X$ be a homotopy complex projective space.  Then there are at
most finitely many complex vector bundles of fixed rank over $X$
which have the same Chern classes.
\end{lemma}

\begin{definition}\label{n-1type}
Let $N$ be a $2n$-dimensional manifold. The {\em $n$-th Postnikov--Moore factorization} of
the stable Gauss map  $N\to BO$  is a factorization  $N\to B \to BO$ such that
$N\to B$ is {\em $n$-connected}, i.e., an isomorphism on the homotopy groups
$\pi_j$ for $j<n$ and surjective on $\pi_n$;  and $B \to BO$ is {\em $n$-coconnected}, i.e.,  an isomorphism on the homotopy groups $\pi_j$ for $j>n$ and injective on $\pi_n$.

We may assume that $B\to BO$ is a fibration. It is unique up to
fiber homotopy equivalence, and is called the {\em normal
$(n-1)$-type of $N$}.
\end{definition}

Let $H$ be the Hopf bundle on $\C \Pro$.
The virtual vector bundle (formal difference of vector bundles) $H^{\otimes 2}-(n+2)H$ defines a stable
 vector bundle over each $\CP^N$. The corresponding classifying maps are compatible and induce a (unique)
 classifying map $$\xi\colon\C\Pro \to BO.$$

The $n$-th Postnikov--Moore factorization of $\mbox{pt}\to BO$ is denoted by $\mbox{pt}\to BO\langle n+1\rangle \stackrel p\to BO$.
The space $BO\langle n+1\rangle$ is called the $(n+1)$-connected cover of $BO$.

\begin{lemma}\label{normaltype}
The normal $(n-1)$-type of $\mathcal M_G$ is $\C \Pro \times BO\langle n+1\rangle \stackrel {\xi\times p}\to  BO\times BO \stackrel \oplus \to BO$.
\end{lemma}

\begin{proof}
 If $\mathcal M_G =\Tilde{G}_2(\R^{n+2})$, then the statement is \cite[Proposition 3]{K}. We need to check that this remains true for all  $\mathcal M_G$.

Let $f\colon \mathcal M_G \to \C\Pro=K(\Z,2)$ represent the generator  $x\in H^2(\mathcal M_G; \Z)$.
Note that both $\mathcal M_G$ and  $\C\Pro$ are simply connected; using the integral cohomology ring
structures of these two spaces and the Hurewicz Theorem, we can check that  $f$ is $n$-connected.

Let $*$ be any constant map $\mathcal M_G \to BO\langle
n+1\rangle$.  We claim that the normal bundle map $\mathcal M_G\to
BO$ factors up to homotopy through
\begin{equation}\label{factor}
 \mathcal M_G \stackrel{(f,\,*)}\longrightarrow \C\Pro \times BO\langle n+1\rangle \stackrel {\xi\times p} \longrightarrow BO\times BO \stackrel \oplus \to BO.
 \end{equation}
To prove this, we need to show that the stable normal bundle of $\mathcal M_G $ is isomorphic to the pullback of the universal bundle via the composition
\begin{equation}\label{xif}
\mathcal M_G \stackrel {f} \to \C\Pro\stackrel {\xi} \to BO.
\end{equation}
Since $c\left(\mathcal M_G\right)= (1+x)^{n + 2}(1+2x)^{-1}$ (see
(\ref{mb})), the stable normal bundle  of  $\mathcal M_G$ has total
Chern class $(1+2x)(1+x)^{-(n+2)}$. Now the pullback of the
universal bundle via (\ref{xif}) is $f^*(-(n+2)H\oplus H^{\otimes
2})$. Hence the two stable bundles have isomorphic total Chern class
(induced by $f^*$). By Lemma~\ref{chernclassstablevb},
 the two stable bundles are isomorphic. It is easy to check
that (\ref{factor}) is a Postnikov--Moore factorization.
\end{proof}

For every fibration $B\to BO$, and every $k\in \N$, there is a
bordism group $\Omega_{k}(B)$. An element is represented by a closed
$k$-manifold $N$, together with a lift of its stable Gauss map along
$B \to BO$, but without any conditions on the homotopy groups. Two
such manifolds $N_0,N_1$ represent the same element if there is a
bordism between them, i.e. a compact $(k+1)$-manifold $W$ with
$\partial W = N_0 \cup N_1$, and $W$ itself is equipped with a lift
of its stable Gauss map to
 $B$ which restricts to the lifts of the
$N_i$'s. By a Pontryagin--Thom construction, the bordism group $\Omega_{k}(B)$
is the $k$-th homotopy group of the Thom spectrum of the stable vector bundle described
by the map $B\to BO$.

In particular we have the $2n$-th bordism group
corresponding to the fibration $\C \Pro \times BO\langle n+1 \rangle
\to BO$, and every manifold $\mathcal M_G$ (together with a choice
of lift) defines an element of this bordism group. Following F. Fang
and J. Wang \cite{FW}, we denote this bordism group by
$\Omega_{2n}^{\langle n+1\rangle} (\C\Pro;\xi)$.

In \cite{K}, Kreck uses his modified surgery theory to classify
manifolds with a given normal $(n-1)$-type. Kreck uses that two
manifolds which are diffeomorphic must in particular represent the
same element of the bordism group $\Omega_{k}(B)$, where $B$ is the
common normal $(n-1)$-type. On the other hand, if two manifolds
represent the same element in $\Omega_{k}(B)$, there is a bordism
between these two manifolds (which is additionally equipped with a
map to $B$), and one can try to modify this bordism by surgery in
the interior, so that the new bordism is an $s$-cobordism, which would
imply that the two manifolds are diffeomorphic. In general it is not
possible to perform such modifications, but in certain cases the
additional information contained in the map to $B$ implies that the
obstruction to do such surgeries vanishes. We have in particular Proposition~\ref{oddn}.

\begin{proposition}(\cite[Corollary 4]{K})\label{oddn}
Let $n\ge3$ be odd. Two closed, simply connected $2n$-manifolds with the
same Euler characteristic and the same normal $(n-1)$-type $B\to BO$
are diffeomorphic if they represent the same element in
$\Omega_{2n}(B)$.
\end{proposition}

\begin{proof}
[Proof of Theorem~\ref{diffeo}] (for  the case of non-semifree actions.)
 By Lemma~\ref{normaltype}, all $\mathcal M_G$ have the same
normal $(n-1)$-type, the normal $(n-1)$-type of
 $\Tilde{G}_2(\R^{n+2})$.  By Lemma~\ref{hypersurface},  $\Tilde{G}_2(\R^{n+2})$ is a hypersurface in $\CP^{n+1}$.
Thus we can use Kreck's modified surgery theory \cite{K}
and the computation by F.  Fang and J. Wang \cite{FW} for the normal $(n-1)$-type of a hypersurface.

By Proposition~\ref{oddn} and Lemma~\ref{normaltype}, two of the
manifolds $\mathcal M_G$ are diffeomorphic if they represent the
same element in $\Omega_{2n}^{\langle n+1\rangle} (\C\Pro;\xi)$.
More precisely, such an element is given by the map
$\mathcal M_G\stackrel{(f,\,*)}{\longrightarrow} \CP^\infty \times BO\langle
n+1\rangle$ from (\ref{factor}).

All our manifolds $\mathcal M_G$ have the same Pontryagin classes.
Using the integral cohomology ring structure of $\mathcal M_G$ given by (\ref{mb}), we can check that
\begin{equation}\label{degree}
\langle f^*(t)^n,[\mathcal M_G] \rangle=\langle
x^n,[\mathcal M_G]\rangle = 2.
\end{equation}
  In \cite{K}, p.745, for complete intersections, Kreck shows that
the Pontryagin classes and the
total degree determine the element in $\Omega_{2n}(B)\otimes \Q$.
The proof of this applies to our case, i.e.,
the Pontryagin classes and the number in
  (\ref{degree}) (which generalizes the total degree of a complete
intersection) determine the element in $\Omega_{2n}^{\langle n+1\rangle}
(\C\Pro;\xi)\otimes \Q$. Hence, the difference between two of the
manifolds $\mathcal M_G$ in $\Omega_{2n}^{\langle n+1\rangle}
(\C\Pro;\xi)$ is a torsion class. As a consequence, the size of the
finite torsion subgroup of this bordism group gives an upper bound
on the number of diffeomorphism types of the manifolds $\mathcal M_G$. This proves the finiteness result in part (1) of the theorem.

For $n=3$, the above torsion subgroup is trivial. This can be proved
in the following way. In this case, the Thom spectrum is $T\xi\wedge
M\mbox{Spin}$, where $T\xi$ is the Thom spectrum of $\xi$, see \cite{O},
section 1.4. Thus the bordism group is the sixth spin bordism group
of $T\xi$ and can be computed using the Atiyah--Hirzebruch spectral
sequence and the Thom isomorphism in singular homology. (For the
differential in the Atiyah--Hirzebruch spectral sequence one needs
to know, see \cite{T}, lemma on p. 751.) Thus $\mathcal M_G$ is
unique up to diffeomorphism, proving the 6-dimensional result in
part (2) of the theorem. We do not carry out the details of the
proof here, since this part of the theorem does also follow from
\v{Z}ubr's classification of simply-connected 6-manifolds \cite{Z}.

For $n=5$ and $n=7$, F. Fang and J. Wang prove in \cite[Propositions
4.3 and 5.1]{FW} that all torsion in $\Omega_{2n}^{\langle
n+1\rangle} (\C\Pro;\xi)$ is in the kernel of the map to
$\Omega_{2n}^{BTop\langle n+1\rangle} (\C\Pro;\xi)$. The latter is a
bordism group of topological manifolds and plays the corresponding
role in a homeomorphism classification of the manifolds. Part (3) of
the theorem follows from this.
\end{proof}
\end{appendix}

\end{document}